\numberwithin{equation}{section}
\newtheorem{theorem}{Theorem}[section]
\newtheorem{proposition}{Proposition}[section]
\newtheorem{corollary}{Corollary}[section]
\newcommand*{\C}{\mathbb{C}}
\newcommand*{\R}{\mathbb{R}}
\newcommand*{\Z}{\mathbb{Z}}
\newcommand*{\N}{\mathbb{N}}
\newcommand{\comment}[1]{}
\title[The screw line of the Riemann zeta-function and its applications]%
      {The screw line of the Riemann zeta-function \\ and its applications} 
\author[M. Suzuki]{Masatoshi Suzuki}
\date{Version of \today}
\subjclass[]{
11M26 
42A82 
46E22
}
\keywords{Riemann zeta-function, screw function, screw line, Weil's criterion}
\begin{abstract}
We investigate the screw line corresponding to the screw function 
associated with the Riemann zeta-function under the Riemann hypothesis 
and derive three necessary and sufficient conditions for the Riemann hypothesis as applications. 
One of them explains the non-negativity of the Weil distribution by means of the norm.
\end{abstract}
\begin{document}

%
\section{Introduction} 
%

Let $g(t)$ be the even real-valued function on the real line defined by 
\begin{equation} \label{eq_101}
\aligned 
g(t)
& := -4(e^{t/2}+e^{-t/2}-2) 
- \frac{t}{2}\left[ \frac{\Gamma'}{\Gamma}\left(\frac{1}{4}\right) - \log \pi \right] \\
&\quad  - 
\frac{1}{4}\left( \Phi(1,2,1/4) - e^{-t/2}\Phi(e^{-2t},2,1/4) \right)
 + \sum_{n \leq e^t} \frac{\Lambda(n)}{\sqrt{n}}(t-\log n)
\endaligned 
\end{equation}
for non-negative $t$, 
where $\Lambda(n)$ is the von Mangoldt function defined by 
$\Lambda(n)=\log p$ if $n=p^k$ with $k \in \Z_{>0}$ 
and $\Lambda(n)=0$ otherwise, 
$\Gamma(s)$ is the gamma function, 
and $\Phi(z,s,a) = \sum_{n=0}^{\infty} (n+a)^{-s}z^n$ 
is the Hurwitz--Lerch zeta-function. 

We assume that the Riemann hypothesis (RH, for short) 
is true, that is, all nontrivial zeros of the Riemann zeta-function $\zeta(s)$ 
lie on the critical line $\Re(s)=1/2$. 
Then, $-g(t)$ is non-negative for all $t \in \R$ and vice versa (\cite[Theorem 1.7]{Su22}). 
This non-negativity can be understood by the Weil distribution 
(\cite[Section 3.4]{Su22}). 
In this paper, we demonstrate that the non-negativity 
can be explained by the norm of a Hilbert space, 
building on the same theoretical background used in \cite{Su23} 
which explained the non-negativity of Li coefficients in terms of norms.
 
As stated in \cite[Theorem 1.2]{Su22}, 
the even function $g(t)$ is a {\it screw function} on the real line in the sense of \cite{KrLa14} 
if the RH is true, 
that is, the kernel defined by 
\begin{equation} \label{eq_0514_1}
G_g(t,u):=g(t-u)-g(t)-g(u)+g(0)
\end{equation}
is non-negative definite on the real line. 
If $g(t)$ is a screw function, 
then there exists a Hilbert space $\mathcal{H}$ 
and a continuous mapping $t \mapsto x(t)$ from $\R$ into $\mathcal{H}$ 
such that 
$\langle x(t+v)-x(v), x(u+v)-x(v) \rangle_{\mathcal{H}}$ 
is independent of $v \in \R$ for all $t,u \in \R$ 
and the equality 
$\langle x(t)-x(0), x(u)-x(0) \rangle_{\mathcal{H}} =G_g(t,u)$ holds.  
Therefore, $\Vert x(t)-x(0) \Vert_{\mathcal{H}}^2=-2g(t)$ 
if we note $g(0)=0$. 
A mapping $x:\R \to \mathcal{H}$ endowed with 
the translation-invariance described above is called a {\it screw line}. 

One of the screw lines corresponding to $g(t)$ can be constructed as follows. 
In general, an even real-valued function $\tilde{g}(t)$ on $\R$ 
with $\tilde{g}(0)=0$ is a screw function if and only if 
it admits a representation 
\[
\tilde{g}(t) = \int_{-\infty}^{\infty} \frac{\cos(\gamma t)-1}{\gamma^2} \, d\tilde{\tau}(\gamma)
\]
with a non-negative measure $\tilde{\tau}$ such that 
$\int_{-\infty}^{\infty}d\tilde{\tau}(\gamma)/(1+\gamma^2)<\infty$. 
Hence, there exists a non-negative measure $\tau$ 
representing $g(t)$ as above under the RH. 
Then the Hilbert space $\mathcal{H}=L^2(\tau)$ and 
the mapping $t \mapsto x(t):=(e^{it\gamma}-1)/\gamma$ 
provide a screw line satisfying $\Vert x(t)-x(0) \Vert_{\mathcal{H}}^2=-2g(t)$ 
(\cite[\S12]{KrLa14}). 
This 
spectral construction for a screw line is important and useful in analysis, 
but it is not very useful for studying the nontrivial zeros of $\zeta(s)$ without assuming the RH. 
Therefore, we provide a construction of a screw line 
that is at least superficially independent of the measure $\tau$, 
and apply it to the RH.
\medskip

Let $L^2(\R)$ be the usual $L^2$-space on the real line with respect to the Lebesgue measure. 
Let $\xi(s)=2^{-1}s(s-1)\pi^{-s/2}\Gamma(s/2)\zeta(s)$ be the Riemann xi-function. 
The nontrivial zeros of  $\zeta(s)$ coincide with the zeros of $\xi(s)$ with multiplicity.
We define 
\begin{equation} \label{eq_102}
E(z):=\xi(1/2-iz)+\xi'(1/2-iz), 
\end{equation}
\begin{equation} \label{eq_103}
\Theta(z):=\overline{E(\bar{z})}/E(z), 
\end{equation}
and 
\begin{equation} \label{eq_104}
\mathfrak{S}_t(z) := \frac{i(1+\Theta(z))}{2\sqrt{\pi}}\, \mathfrak{P}_t(z)
\end{equation}
with
\begin{equation} \label{eq_105}
\aligned 
\mathfrak{P}_t(z)
& := \frac{4(e^{t/2}-1)}{1-2iz} + \frac{4(e^{-t/2}-1)}{1+2iz} 
 -  \sum_{n \leq e^{t}} \frac{\Lambda(n)}{\sqrt{n}} \frac{e^{iz(t-\log n)}-1}{iz} \\
& \quad +
\frac{e^{itz}-1}{iz} \left[ 
\frac{Z'}{Z}\left(\frac{1}{2}-iz \right)
-\frac{1}{2}\log\pi 
+\frac{1}{2}
\frac{\Gamma'}{\Gamma}\left(\frac{1}{4}+\frac{iz}{2}\right)
\right] \\
& \quad 
- \frac{1}{2iz}\left[
\frac{\Gamma'}{\Gamma}\left(\frac{1}{4}\right)
-
\frac{\Gamma'}{\Gamma}\left(\frac{1}{4}+\frac{iz}{2}\right)
\right] \\
& \quad -\frac{1}{2iz}e^{-t/2}
\Bigl[ \Phi(e^{-2t},1,1/4) -\Phi(e^{-2t},1,\tfrac{1}{2}(\tfrac{1}{2}+iz)) \Bigr] 
\endaligned 
\end{equation}
for a non-negative real number $t$ and a complex number $z$, 
where $Z(s):=\pi^{-s/2}\Gamma(s/2)\zeta(s)$. 
For negative $t$, we define $\mathfrak{S}_t(z):=\mathfrak{S}_{-t}(z)$. 
For this $\mathfrak{S}_t$, we first obtain the following. 

\begin{proposition} \label{prop_1}
For any fixed $t \in \R$, 
$\mathfrak{S}_t(z)$ belongs to $L^2(\R)$ as a function of $z$.
\end{proposition}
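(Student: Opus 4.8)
The plan is to exploit a factorisation of $\mathfrak{S}_t$. Put $\Xi(z):=\xi(\tfrac12-iz)$, which is entire and real-valued on $\R$; by the chain rule $\xi'(\tfrac12-iz)=i\,\Xi'(z)$, so $E(z)=\Xi(z)+i\,\Xi'(z)$ and $\overline{E(\bar z)}=\Xi(z)-i\,\Xi'(z)$, and therefore
\[
\frac{1+\Theta(z)}{2}=\frac{\Xi(z)}{E(z)},\qquad
\mathfrak{S}_t(z)=\frac{i}{\sqrt{\pi}}\,\frac{\Xi(z)}{E(z)}\,\mathfrak{P}_t(z).
\]
For real $z$ one has $|E(z)|^{2}=\Xi(z)^{2}+\Xi'(z)^{2}$, which gives the uniform bounds $|\Xi(z)/E(z)|\le1$ and $|\Xi'(z)/E(z)|\le1$ on $\R$. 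I would then prove the proposition by showing that $\mathfrak{S}_t$ extends to a function that is continuous on $\R$ and $O\big((\log(2+|z|))/(1+|z|)\big)$ as $|z|\to\infty$, which is clearly square-integrable. Since $\mathfrak{S}_t=\mathfrak{S}_{-t}$ and the value $t=0$ is trivial ($\mathfrak{P}_0\equiv0$), I may assume $t>0$.

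First I would dispose of all terms of $\mathfrak{P}_t$ in \eqref{eq_105} other than the one containing $\tfrac{Z'}{Z}(\tfrac12-iz)$. On $\R$ the denominators $1\mp2iz$ never vanish; the factors $\dfrac{e^{iza}-1}{iz}$ (with $a=t$ or $a=t-\log n$, $n\le e^{t}$), $\dfrac1{iz}\big[\tfrac{\Gamma'}{\Gamma}(\tfrac14)-\tfrac{\Gamma'}{\Gamma}(\tfrac14+\tfrac{iz}{2})\big]$, and $\dfrac1{iz}\big[\Phi(e^{-2t},1,\tfrac14)-\Phi(e^{-2t},1,\tfrac14+\tfrac{iz}{2})\big]$ are regular at $z=0$; and, using the classical estimate $\tfrac{\Gamma'}{\Gamma}(\tfrac14+\tfrac{iz}{2})=O(\log(2+|z|))$ together with the elementary bound $|\Phi(e^{-2t},1,\tfrac14+\tfrac{iz}{2})|\le\frac{2}{|z|}\,(1-e^{-2t})^{-1}$, each of these terms is continuous on $\R$ and $O\big((\log(2+|z|))/(1+|z|)\big)$ as $|z|\to\infty$. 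Multiplying by the bounded factor $\Xi(z)/E(z)$ preserves both properties.

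The remaining contribution is $\dfrac{i}{\sqrt{\pi}}\cdot\dfrac{\Xi(z)}{E(z)}\cdot\dfrac{e^{itz}-1}{iz}\cdot\dfrac{Z'}{Z}\!\left(\tfrac12-iz\right)$, where $\tfrac{Z'}{Z}(\tfrac12-iz)$ has poles on $\R$, precisely at the ordinates of the zeros of $\zeta(s)$ on the critical line. Here I would use $\xi(s)=\tfrac12 s(s-1)Z(s)$ to write $\tfrac{Z'}{Z}(\tfrac12-iz)=\tfrac{\xi'}{\xi}(\tfrac12-iz)-\dfrac{8iz}{1+4z^{2}}$, and the chain rule to write $\tfrac{\xi'}{\xi}(\tfrac12-iz)=i\,\Xi'(z)/\Xi(z)$, so that
\[
\frac{\Xi(z)}{E(z)}\cdot\frac{Z'}{Z}\!\left(\tfrac12-iz\right)
=i\,\frac{\Xi'(z)}{E(z)}-\frac{\Xi(z)}{E(z)}\cdot\frac{8iz}{1+4z^{2}}.
\]
The right-hand side is bounded on $\R$ by the two uniform bounds above, and it is continuous on $\R$: the apparent poles at the ordinates of the zeros of $\zeta$ have cancelled, because at a real zero of $\Xi$ of order $m\ge1$ both $\Xi'$ and $E$ vanish to order $m-1$, so $\Xi'/E$ is regular there. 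Multiplying by $\dfrac{e^{itz}-1}{iz}=O(1/(1+|z|))$, this contribution too is continuous on $\R$ and $O(1/(1+|z|))$. Summing the finitely many pieces yields the asserted decay of $\mathfrak{S}_t$, hence $\mathfrak{S}_t\in L^{2}(\R)$. The crux---and the only step where something could go wrong---is this last one: one must verify that the real poles of $\tfrac{Z'}{Z}(\tfrac12-iz)$ are annihilated by the factor $1+\Theta(z)$ and that no new singularity is created at a real zero of $E$ (equivalently, at a possible multiple zero of $\zeta$); the displayed identity makes both facts transparent. No use of the Riemann hypothesis is actually needed for this proposition.
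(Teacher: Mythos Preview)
Your argument is correct, and it differs from the paper's in a way worth noting. The paper establishes regularity of $\mathfrak{S}_t$ on $\R$ by invoking Proposition~\ref{prop_201} (the Weil explicit formula identity $\mathfrak{P}_t=P_t$), which shows that all real singularities of $\mathfrak{P}_t$ are simple poles at the $\gamma\in\Gamma$, cancelled by the simple zeros of $(1+\Theta)/2$. For the decay it then appeals to Titchmarsh's approximate formula $\tfrac{\zeta'}{\zeta}(\tfrac12-iz)=\sum_{|\Re z-\gamma|\le1}\tfrac{i}{z-\gamma}+O(\log|z|)$ together with zero-counting. You bypass both ingredients: the single algebraic identity
\[
\frac{\Xi(z)}{E(z)}\cdot\frac{Z'}{Z}\!\left(\tfrac12-iz\right)=i\,\frac{\Xi'(z)}{E(z)}-\frac{\Xi(z)}{E(z)}\cdot\frac{8iz}{1+4z^{2}},
\]
combined with the elementary bounds $|\Xi/E|\le1$ and $|\Xi'/E|\le1$ on $\R$, simultaneously yields regularity at the critical zeros (including hypothetical multiple ones) and uniform boundedness. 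This is more self-contained, needing neither the explicit formula nor the local zero-sum approximation; the price is only that the paper's route makes the pole structure of $\mathfrak{P}_t$ globally transparent, which is reused later in the expansion \eqref{eq_205}. One small remark: your dismissal of $t=0$ as ``trivial'' is correct but relies on the cancellation $\Phi(e^{-2t},1,\tfrac14)-\Phi(e^{-2t},1,\tfrac14+\tfrac{iz}{2})\to\tfrac{\Gamma'}{\Gamma}(\tfrac14+\tfrac{iz}{2})-\tfrac{\Gamma'}{\Gamma}(\tfrac14)$ as $t\to0^+$, since each $\Phi(1,1,\cdot)$ individually diverges; this is straightforward once the difference is written as a single absolutely convergent sum.
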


%
From this result, the mapping $t \mapsto \mathfrak{S}_t(z)$ from $\R$ to $L^2(\R)$ is defined.
For this mapping, the following holds under the RH.

\begin{theorem} \label{thm_1}
Assuming that the RH is true, the mapping 
$t \mapsto \mathfrak{S_t}(z)$ from $\R$ to $L^2(\R)$ is a screw line corresponding to $g(t)$. 
\end{theorem}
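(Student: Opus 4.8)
The plan is to prove Theorem~\ref{thm_1} by verifying directly that the map $t \mapsto \mathfrak{S}_t$ satisfies the two defining properties of a screw line corresponding to $g(t)$: first, the translation-invariance of the inner product increments, and second, the identity $\langle \mathfrak{S}_t - \mathfrak{S}_0, \mathfrak{S}_u - \mathfrak{S}_0 \rangle_{L^2(\R)} = G_g(t,u)$. By Proposition~\ref{prop_1} the map is well-defined into $L^2(\R)$, so the content lies entirely in computing these inner products. The cleanest route is to pass to the spectral side: under the RH, $\zeta(s)$ (hence $\xi(s)$, hence $E(z)$) has all zeros on the critical line, so $\Theta(z)$ in \eqref{eq_103} is an inner function of the upper half-plane whose boundary values on $\R$ satisfy $|\Theta(z)| = 1$. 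Consequently the factor $i(1+\Theta(z))/(2\sqrt{\pi})$ in \eqref{eq_104} is a bounded function of modulus at most $1/\sqrt{\pi}$ on the line, and the map $h \mapsto (i(1+\Theta)/2\sqrt{\pi})\, h$ is essentially an isometry (up to the de~Branges--Krein machinery relating $E$ to a model space) when restricted to the relevant subspace. The strategy is therefore to reduce the computation of $\langle \mathfrak{S}_t, \mathfrak{S}_u \rangle$ to an integral involving only $\mathfrak{P}_t$ and $\mathfrak{P}_u$ against the measure $|1+\Theta(z)|^2\, dz/(4\pi)$.

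\medskip

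Next I would identify the measure that appears: writing $\mathfrak{S}_t = (i(1+\Theta)/2\sqrt\pi)\mathfrak P_t$, one has
\[
\langle \mathfrak{S}_t, \mathfrak{S}_u \rangle_{L^2(\R)}
= \frac{1}{4\pi} \int_{-\infty}^{\infty} \mathfrak{P}_t(x)\,\overline{\mathfrak{P}_u(x)}\,|1+\Theta(x)|^2\,dx .
\]
The key structural fact (which should be available from the construction of $g$ in \cite{Su22}, perhaps implicitly) is that $g(t)$ admits the representation $g(t) = \int (\cos(\gamma t) - 1)/\gamma^2\, d\tau(\gamma)$ with $\tau$ closely tied to the zeros of $\xi(s)$, and that $\mathfrak{P}_t$ is designed precisely so that its boundary behaviour encodes $(e^{i\gamma t}-1)/\gamma$ against this $\tau$. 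Thus I would aim to show that the explicit formula underlying \eqref{eq_105} --- the terms $4(e^{\pm t/2}-1)/(1\mp 2iz)$, the prime sum, the $Z'/Z$ and $\Gamma'/\Gamma$ contributions, and the Hurwitz--Lerch term --- is exactly the decomposition of $(e^{itz}-1)/(iz)$ that arises from applying the logarithmic-derivative explicit formula to $\xi'(1/2-iz)/\xi(1/2-iz)$, so that on the critical line $\mathfrak{S}_t(z)$ literally becomes (a constant multiple of) $(e^{itz}-1)/(iz)$ paired against the spectral measure $\tau$. From that point the spectral construction recalled in the introduction (\cite[\S12]{KrLa14}) gives both the translation-invariance and the identity with $G_g(t,u)$ for free.

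\medskip

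Concretely the steps are: (i) establish that under the RH, $\Theta$ is inner and $(1+\Theta)$ has no zeros on $\R$ other than forced cancellations at zeros of $E$, so the measure $|1+\Theta(x)|^2\,dx$ is well-defined and finite against the relevant test functions --- this follows from Proposition~\ref{prop_1} applied with $t=0$ and bilinearity; (ii) compute $\langle \mathfrak{S}_{t+v} - \mathfrak{S}_v, \mathfrak{S}_{u+v} - \mathfrak{S}_v \rangle$ and show independence of $v$, which amounts to checking that the $v$-dependence of $\mathfrak{P}_{t+v} - \mathfrak{P}_v$ factors as $e^{ivz}$ times something, so that the modulus-one phase cancels in the inner product; (iii) compute $\langle \mathfrak{S}_t - \mathfrak{S}_0, \mathfrak{S}_u - \mathfrak{S}_0 \rangle$ and match it term-by-term with $G_g(t,u) = g(t-u) - g(t) - g(u) + g(0)$ using the definition \eqref{eq_101} of $g$, contour-shifting each piece of \eqref{eq_105} and applying the residue theorem at the pole $z = 0$ and at $z = \pm i/2$. \textbf{The main obstacle} is step (iii): matching the elementary terms is routine, but the terms coming from $Z'/Z$ and $\Gamma'/\Gamma$ and the Hurwitz--Lerch function require careful justification of contour shifts (controlling growth in vertical strips, handling the poles of $\Gamma'/\Gamma$ and the branch structure of $\Phi$) and a delicate residue bookkeeping to see that the zeros of $\xi$ --- where $1+\Theta$ vanishes and where the Hilbert-space inner product "sees" the spectral measure $\tau$ --- contribute exactly $\int (\cos\gamma(t-u) - \cos\gamma t - \cos\gamma u + 1)/\gamma^2\, d\tau(\gamma)$, i.e. $G_g(t,u)$. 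This is where the RH is essential: it guarantees $\tau \geq 0$ and that the zeros lie where the de~Branges space structure makes the isometry exact rather than merely contractive.
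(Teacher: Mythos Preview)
Your outline gestures at the right ingredients (model-space structure, connection to the spectral measure via the zeros of $\xi$), but the concrete steps (ii) and (iii) do not work as stated, and the key lemma that makes the paper's proof short is missing.

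In step (ii) you claim that $\mathfrak{P}_{t+v}-\mathfrak{P}_v$ factors as $e^{ivz}$ times a $v$-independent function, so that a unimodular phase drops out of the inner product. This is false already for the first term of \eqref{eq_105}: one gets $4(e^{(t+v)/2}-e^{v/2})/(1-2iz)=e^{v/2}\cdot 4(e^{t/2}-1)/(1-2iz)$, whose $v$-dependence is $e^{v/2}$, not $e^{ivz}$; the prime sum behaves even worse since its summation range depends on $t$. The translation invariance is real, but it comes from the $\gamma$-variable in the expansion over zeros (where $|e^{i\gamma v}|=1$ under the RH), not from a $z$-phase.

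What the paper actually does is bypass both (ii) and (iii) entirely. The decisive step is Proposition~\ref{prop_201}, which shows unconditionally that $\mathfrak{P}_t(z)$ equals the partial-fraction sum $P_t(z)=\sum_{\gamma}m_\gamma\,\dfrac{e^{i\gamma t}-1}{\gamma}\cdot\dfrac{1}{z-\gamma}$. Combined with \eqref{eq_104} this gives
\[
\mathfrak{S}_t(z)=\sum_{\gamma\in\Gamma}\sqrt{m_\gamma}\,\frac{e^{i\gamma t}-1}{\gamma}\,F_\gamma(z),
\qquad
F_\gamma(z)=\sqrt{\frac{m_\gamma}{\pi}}\,\frac{i(1+\Theta(z))}{2(z-\gamma)}.
\]
Under the RH the family $\{F_\gamma\}_{\gamma\in\Gamma}$ is an \emph{orthonormal basis} of the model space $\mathcal{K}(\Theta)\subset L^2(\R)$ (Proposition~\ref{prop_202}). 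With that in hand, every inner product $\langle\mathfrak{S}_{t+v}-\mathfrak{S}_v,\mathfrak{S}_{u+v}-\mathfrak{S}_v\rangle$ reduces by orthonormality to the coefficient sum $\sum_\gamma m_\gamma\,\gamma^{-2}(e^{i\gamma t}-1)(e^{-i\gamma u}-1)$, which is independent of $v$ and equals $G_g(t,u)$ by \cite[(1.9)]{Su22}. No contour shifting, no term-by-term matching with \eqref{eq_101}, and no residue bookkeeping at $z=0,\pm i/2$ is needed. Your step (iii) would in effect be re-deriving Proposition~\ref{prop_201} by a harder route, and without the orthonormal-basis statement you would still not be able to evaluate the $L^2(\R)$ integral against $|1+\Theta|^2\,dz$.
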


The following immediately follows from Theorem \ref{thm_1}. 

\begin{corollary} \label{cor_1}
The RH is true if and only if the equality 
\begin{equation} \label{eq_107}
\Vert \mathfrak{S}_t \Vert_{L^2(\R)}^2 = -2g(t)
\end{equation}
holds for all $t \geq t_0$ for some $t_0  \geq 0$. 
\end{corollary}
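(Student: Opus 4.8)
The plan is to prove the two implications separately. The forward one will be a direct consequence of Theorem~\ref{thm_1} together with the normalization $\mathfrak{S}_0=0$, and the reverse one will be reduced to a one-sided non-negativity statement for $-g$ and hence to the argument underlying \cite[Theorem~1.7]{Su22}.

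Assume first that the RH holds. By Theorem~\ref{thm_1}, $t\mapsto\mathfrak{S}_t$ is a screw line corresponding to $g$, so by the property recalled in the Introduction we have $\Vert \mathfrak{S}_t-\mathfrak{S}_0\Vert_{L^2(\R)}^2=-2g(t)$ (using $g(0)=0$). It therefore suffices to check that $\mathfrak{S}_0=0$ in $L^2(\R)$, i.e.\ $\mathfrak{P}_0\equiv 0$. Putting $t=0$ in \eqref{eq_105}: the first two terms vanish since $e^{\pm t/2}-1=0$; the sum over $n\le e^{t}$ reduces to the term $n=1$, where $\Lambda(1)=0$; the term carrying the factor $(e^{itz}-1)/(iz)$ vanishes; and the remaining two terms cancel by the classical identity $\tfrac{\Gamma'}{\Gamma}(a)-\tfrac{\Gamma'}{\Gamma}(b)=\Phi(1,1,b)-\Phi(1,1,a)$, understood as the convergent difference of the corresponding partial sums, applied with $a=\tfrac14$, $b=\tfrac14+\tfrac{iz}{2}=\tfrac12(\tfrac12+iz)$, and $e^{-2t}=1$. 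Hence $\Vert \mathfrak{S}_t\Vert_{L^2(\R)}^2=-2g(t)$ for all $t\in\R$, so \eqref{eq_107} holds with, say, $t_0=0$.

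Conversely, suppose \eqref{eq_107} holds for all $t\ge t_0$. The left-hand side is a squared norm, hence non-negative, so $-g(t)\ge 0$ for all $t\ge t_0$, and by evenness of $g$ for all $\lvert t\rvert\ge t_0$. It remains to deduce the RH from this. If the RH were false, then by the functional equation there would be a zero $\rho$ of $\zeta(s)$ with $\Re\rho>1/2$ and $\Im\rho\ne 0$; Weil's explicit formula expresses $g(t)$ for large $t$ as a main part, which is cancelled by the archimedean terms already present in \eqref{eq_101}, plus a sum over the nontrivial zeros, in which the quadruple $\{\rho,\bar\rho,1-\rho,1-\bar\rho\}$ of largest real part contributes a term of order $e^{(\Re\rho-1/2)\lvert t\rvert}$ times a non-degenerate trigonometric factor in $\Im\rho\cdot t$. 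This term dominates the remaining contributions and changes sign, so $g(t)>0$ for arbitrarily large $t$, contradicting $-g(t)\ge 0$ for $t\ge t_0$. Therefore the RH holds. This last step is precisely the reasoning behind \cite[Theorem~1.7]{Su22}, and one may simply invoke that argument rather than repeat it.

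The main obstacle is this final sign-change step: it requires controlling the convergence of the sum over nontrivial zeros well enough to isolate the dominant off-line contribution and to conclude that $-g$ cannot remain non-negative near $+\infty$ once the RH fails. Everything else — the normalization $\mathfrak{S}_0=0$ and the reduction of the converse to the sign of $g$ on a half-line — is routine.
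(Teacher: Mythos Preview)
Your proof is correct and follows essentially the same route as the paper: the forward direction combines Theorem~\ref{thm_1} with the verification $\mathfrak{S}_0=0$ (which the paper does in the proof of Theorem~\ref{thm_1} via the same digamma/Lerch identity, phrased as the limit $t\to 0^+$), and the converse reduces to $-g(t)\ge 0$ on $[t_0,\infty)$, which the paper dispatches by citing \cite[Theorems~1.7 and~11.1]{Su22}. The only cosmetic difference is that the paper invokes the half-line result (Theorem~11.1) rather than sketching the dominant off-line-zero argument as you do.
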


In \cite[Theorem 1.7]{Su22}, it has been proven that the non-negativity of $-g(t)$ 
is equivalent to the RH being true. 
Corollary \ref{cor_1} explains this non-negativity through a set of equalities involving norms. 
On the other hand, a screw function defines a non-negative definite hermitian form. 
As a variant of Corollary \ref{cor_1}, 
the non-negativity of the hermitian form associated with $g(t)$ 
is also explained through a set of equalities involving norms. 

The kernel \eqref{eq_0514_1} defines a hermitian form on 
the space $C_c^\infty(\R)$ of all smooth and compactly supported function on $\R$  by 
\begin{equation} \label{eq_0515_2}
\langle \phi_1, \phi_2 \rangle_{G_g} 
:= \int_{-\infty}^{\infty}\int_{-\infty}^{\infty}  G_g(t,u) \phi_1(u) \overline{\phi_2(t)}\, dtdu
\quad \text{for} \quad \phi_1,\,\phi_2 \in C_c^\infty(\R). 
\end{equation}
This hermitian form is non-negative definite if $g(t)$ is a screw function (\cite[\S5]{KrLa14}), that is, if the RH is true. 
By the uniformity of the $L^2$-norm of $\mathfrak{S}_t(z)$ 
on a compact set of $t$ obtained in the proof of Proposition \ref{prop_1} 
and Minkowski's integral inequality, we obtain the following.

\begin{proposition} \label{prop_102} 
For $\phi \in C_c^\infty(\R)$, 
we define 
\[
\widehat{\mathcal{P}_\phi}(z):=
\int_{-\infty}^{\infty} \mathfrak{S}_t(z)\phi(t)  \, dt
\]
using \eqref{eq_104}.  
Then $\widehat{\mathcal{P}_\phi}(z)$ belongs to $L^2(\R)$. 
\end{proposition}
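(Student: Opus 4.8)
The plan is to deduce the statement directly from Minkowski's integral inequality, using as the only substantive input the local uniformity in $t$ of the $L^2(\R)$-bound for $\mathfrak{S}_t$ that is already produced in the proof of Proposition \ref{prop_1}. First I would fix $\phi \in C_c^\infty(\R)$ and set $K := \operatorname{supp}\phi$, a compact subset of $\R$. Inspecting the proof of Proposition \ref{prop_1}, one sees that the bound obtained there is of the form $\Vert \mathfrak{S}_t \Vert_{L^2(\R)} \le C(t)$, where $C(t)$ depends on $t$ only through quantities such as $e^{|t|/2}$, the partial sum $\sum_{n \le e^{|t|}} \Lambda(n)/\sqrt{n}$, and values of $\Gamma'/\Gamma$, $Z'/Z$ and $\Phi$ in ranges controlled by $|t|$; in particular $C(t)$ is locally bounded, so that $M_K := \sup_{t \in K} \Vert \mathfrak{S}_t \Vert_{L^2(\R)} < \infty$. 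I would also recall that the prefactor $i(1+\Theta(z))/(2\sqrt{\pi})$ in \eqref{eq_104} satisfies $|1+\Theta(z)| \le 2$ for almost every $z \in \R$, since by \eqref{eq_103} one has $\Theta(z) = \overline{E(z)}/E(z)$ on the real line, of modulus $1$ wherever $E(z) \neq 0$; this unconditional bound is exactly what renders the estimate of Proposition \ref{prop_1} uniform, and it is reused here with no extra cost.

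Next I would check that $(t,z) \mapsto \mathfrak{S}_t(z)\,\phi(t)$ is jointly measurable on $\R \times \R$. Each term of $\mathfrak{P}_t(z)$ in \eqref{eq_105} is continuous in $(t,z)$ off the exceptional set $(\{t = \log n : n \ge 2\} \times \R) \cup (\R \times \{z : E(z) = 0\})$, which has planar Lebesgue measure zero, while the prefactor depends measurably on $z$ alone; hence $\mathfrak{S}_t$ is a Carathéodory function and is jointly measurable, and multiplying by the continuous $\phi(t)$ preserves this. Consequently Tonelli's theorem applies, $\int_K |\mathfrak{S}_t(z)\phi(t)|\,dt < \infty$ for almost every $z$, and Minkowski's integral inequality is legitimate.

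Carrying this out, one gets
\[
\Vert \widehat{\mathcal{P}_\phi} \Vert_{L^2(\R)}
= \Bigl\Vert \int_{K} \mathfrak{S}_t(\cdot)\,\phi(t)\,dt \Bigr\Vert_{L^2(\R)}
\le \int_{K} |\phi(t)|\,\Vert \mathfrak{S}_t \Vert_{L^2(\R)}\,dt
\le M_K \,\Vert \phi \Vert_{L^1(\R)} < \infty ,
\]
so that $\widehat{\mathcal{P}_\phi}(z)$ converges for almost every $z$, is measurable, and lies in $L^2(\R)$, which is the assertion.

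The only point requiring care is the locally uniform bound $M_K < \infty$, and this demands no new argument: it suffices to revisit the proof of Proposition \ref{prop_1} and observe that each of the estimates there — for every group of terms of $\mathfrak{P}_t$ as well as for the bounded factor $1+\Theta$ — is uniform for $t$ in a compact set, the $t$-dependence entering only through continuous or piecewise-monotone functions of $|t|$. Thus tracking the constants in that proof is all that is needed; I do not anticipate any further obstacle.
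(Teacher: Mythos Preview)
Your proposal is correct and follows exactly the route indicated in the paper: the uniform bound on $\Vert \mathfrak{S}_t \Vert_{L^2(\R)}$ for $t$ in a compact set (extracted from the proof of Proposition~\ref{prop_1}) combined with Minkowski's integral inequality. Your added measurability check is a welcome bit of extra care that the paper leaves implicit.
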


Based on this proposition, the following holds. 

\begin{theorem} \label{thm_2} 
The RH is true if and only if the equality 
\begin{equation} \label{eq_0514_2}
\Vert \widehat{\mathcal{P}_\phi} \Vert_{L^2(\R)}^2 = \langle \phi, \phi \rangle_{G_g}
\end{equation}
holds for all $\phi \in C_c^\infty(\R)$ satisfying $\int_{-\infty}^{\infty} \phi(t)\, dt=0$. 
If the RH is true, equality \eqref{eq_0514_2} holds for all $\phi \in C_c^\infty(\R)$. 
\end{theorem}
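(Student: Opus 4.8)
The plan is to evaluate $\Vert\widehat{\mathcal{P}_\phi}\Vert_{L^2(\R)}^2$ by squaring out and exchanging the order of integration, so as to express it through the Gram kernel $\langle\mathfrak{S}_t,\mathfrak{S}_u\rangle_{L^2(\R)}$, and then to read off one implication from Theorem~\ref{thm_1} and the other from \cite[Theorem~1.7]{Su22}. The only genuinely analytic point is the interchange of integrals; everything after it is formal.

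For that interchange — which I expect to be the main obstacle — I would start from $\widehat{\mathcal{P}_\phi}(z)=\int_\R\mathfrak{S}_t(z)\phi(t)\,dt$ and use the uniform bound $\sup_{t\in K}\Vert\mathfrak{S}_t\Vert_{L^2(\R)}<\infty$ on compact sets $K$ (established in the proof of Proposition~\ref{prop_1} and already exploited for Proposition~\ref{prop_102}). Via Cauchy--Schwarz in $z$ this dominates the triple integrand $\mathfrak{S}_t(z)\overline{\mathfrak{S}_u(z)}\,\phi(t)\overline{\phi(u)}$ by $\Vert\mathfrak{S}_t\Vert_{L^2(\R)}\Vert\mathfrak{S}_u\Vert_{L^2(\R)}\,|\phi(t)|\,|\phi(u)|$, which is integrable over $\R\times(\operatorname{supp}\phi)^2$. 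Fubini's theorem then yields
\[
\Vert\widehat{\mathcal{P}_\phi}\Vert_{L^2(\R)}^2
=\int_\R\int_\R\langle\mathfrak{S}_t,\mathfrak{S}_u\rangle_{L^2(\R)}\,\phi(t)\overline{\phi(u)}\,dt\,du .
\]

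To prove that the RH implies \eqref{eq_0514_2}, I would invoke Theorem~\ref{thm_1}: $t\mapsto\mathfrak{S}_t$ is a screw line for $g$, so $\langle\mathfrak{S}_t-\mathfrak{S}_0,\mathfrak{S}_u-\mathfrak{S}_0\rangle_{L^2(\R)}=G_g(t,u)$. Since $\mathfrak{S}_0=0$ in $L^2(\R)$ — one checks that every term of $\mathfrak{P}_0$ in \eqref{eq_105} vanishes, the last two cancelling because $\lim_{t\to0^+}e^{-t/2}(\Phi(e^{-2t},1,1/4)-\Phi(e^{-2t},1,\tfrac12(\tfrac12+iz)))=\tfrac{\Gamma'}{\Gamma}(\tfrac14+\tfrac{iz}{2})-\tfrac{\Gamma'}{\Gamma}(\tfrac14)$, which is the fact underlying Corollary~\ref{cor_1} — this reduces to $\langle\mathfrak{S}_t,\mathfrak{S}_u\rangle_{L^2(\R)}=G_g(t,u)$. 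Substituting into the displayed identity and using $G_g(t,u)=G_g(u,t)$ (as $g$ is even), the relabelling $t\leftrightarrow u$ brings the right-hand side into the form \eqref{eq_0515_2}, so $\Vert\widehat{\mathcal{P}_\phi}\Vert_{L^2(\R)}^2=\langle\phi,\phi\rangle_{G_g}$ for \emph{every} $\phi\in C_c^\infty(\R)$; this settles simultaneously the ``only if'' direction restricted to $\int\phi=0$ and the final assertion of the theorem. I would also note that, even without identifying $\mathfrak{S}_0$ with $0$, the terms involving $\mathfrak{S}_0$ carry a factor $\int\phi$ and disappear precisely when $\int\phi=0$, which is why the restriction in the equivalence is natural.

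For the converse, I would assume \eqref{eq_0514_2} for all $\phi\in C_c^\infty(\R)$ with $\int\phi=0$, so that $\langle\phi,\phi\rangle_{G_g}=\Vert\widehat{\mathcal{P}_\phi}\Vert_{L^2(\R)}^2\ge0$ for all such $\phi$. Fixing $t\in\R$ and a mollifier $\rho_\varepsilon$, I would apply this to $\phi_\varepsilon:=\rho_\varepsilon(\,\cdot-t\,)-\rho_\varepsilon\in C_c^\infty(\R)$, which has $\int\phi_\varepsilon=0$. Since $g$, and hence $G_g(x,y)=g(x-y)-g(x)-g(y)+g(0)$, is continuous (clear from \eqref{eq_101}), letting $\varepsilon\to0^+$ gives $\langle\phi_\varepsilon,\phi_\varepsilon\rangle_{G_g}\to G_g(t,t)-2G_g(t,0)+G_g(0,0)=G_g(t,t)=-2g(t)$, using $g(0)=0$ and $G_g(s,0)=0$ for all $s$. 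Hence $-2g(t)\ge0$ for every $t\in\R$, i.e.\ $-g$ is non-negative on $\R$, and \cite[Theorem~1.7]{Su22} then yields the RH. The only delicate point in this step is the convergence $\langle\phi_\varepsilon,\phi_\varepsilon\rangle_{G_g}\to-2g(t)$, which is immediate from the joint continuity of $G_g$ and the approximate-identity property of $\rho_\varepsilon$.
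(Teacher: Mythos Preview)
Your proof is correct, and both directions take routes different from the paper's.

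For the forward implication, the paper does not pass through Theorem~\ref{thm_1} and the Gram kernel $\langle\mathfrak{S}_t,\mathfrak{S}_u\rangle_{L^2(\R)}$. Instead it expands $\widehat{\mathcal{P}_\phi}$ directly in the orthonormal basis $\{F_\gamma\}_{\gamma\in\Gamma}$ of $\mathcal{K}(\Theta)$ (Proposition~\ref{prop_202}) via \eqref{eq_205}, obtaining
\[
\widehat{\mathcal{P}_\phi}(z)=\sum_{\gamma\in\Gamma}\sqrt{m_\gamma}\,\frac{\widehat{\phi}(\gamma)-\widehat{\phi}(0)}{\gamma}\,F_\gamma(z),
\qquad
\Vert\widehat{\mathcal{P}_\phi}\Vert_{L^2(\R)}^2=\sum_{\gamma\in\Gamma}m_\gamma\left|\frac{\widehat{\phi}(\gamma)-\widehat{\phi}(0)}{\gamma}\right|^2,
\]
and then matches this with $\langle\phi,\phi\rangle_{G_g}$ using the zero-sum formula $G_g(t,u)=\sum_\gamma m_\gamma\,\gamma^{-2}(e^{i\gamma t}-1)(e^{-i\gamma u}-1)$ from \cite[(1.9)]{Su22}. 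Your Fubini argument reaches the same endpoint by invoking Theorem~\ref{thm_1} as a black box; since the proof of Theorem~\ref{thm_1} already rests on the basis $\{F_\gamma\}$, the content is the same, but your packaging is cleaner and avoids repeating the spectral computation.

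For the converse, the difference is more substantial. The paper argues by contradiction in the style of Weil's criterion: assuming the RH fails, it picks a non-real zero $\gamma_0$, uses \cite[Lemma~1]{Yo92} to build $\psi\in C_c^\infty(\R)$ whose Fourier transform is concentrated near $\gamma_0$ and $\overline{\gamma_0}$, sets $\phi=D\psi$ (so $\widehat{\phi}(0)=0$), and computes $\langle\phi,\phi\rangle_{G_g}=\langle\psi,\psi\rangle_W=-m_{\gamma_0}+O(\epsilon)<0$, contradicting \eqref{eq_0514_2}. Your mollifier argument $\phi_\varepsilon=\rho_\varepsilon(\cdot-t)-\rho_\varepsilon$ is more elementary: it extracts $-2g(t)\ge0$ pointwise from the non-negative definiteness on $\{\int\phi=0\}$ and then appeals to \cite[Theorem~1.7]{Su22}. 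This avoids Yoshida's lemma and the Weil form entirely, at the cost of relying on the equivalence $-g\ge0\Leftrightarrow\text{RH}$ as an external input; the paper's route is more self-contained in that it essentially reproves the relevant half of Weil's criterion inside the argument, and it dovetails directly with Corollary~\ref{cor_2}.
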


The advantage of Corollary \ref{cor_1} and Theorem \ref{thm_2} 
is that it has turned the criterion of the RH 
from a set of inequalities like Weil's criterion into a set of equalities. 
Theorem \ref{thm_2} can be reformulated as follows.

We denote the set of all zeros of $\xi(1/2-iz)$ 
without counting multiplicity by $\Gamma$  
and the multiplicity of $\gamma \in \Gamma$ by $m_\gamma$. 
Weil's hermitian form on $C_c^\infty(\R)$ is defined by 
\[
\langle \psi_1, \psi_2 \rangle_W 
= \sum_{\gamma \in \Gamma} m_\gamma
\int_{-\infty}^{\infty} \psi_1(t)e^{i\gamma t}\,dt 
\overline{\int_{-\infty}^{\infty} \psi_2(u) e^{i \bar{\gamma} u} \, du }
\]
for $\psi_1, \psi_2 \in C_c^\infty(\R)$. 
The relation 
%
\begin{equation} \label{eq_0515_4}
\langle D\psi_1, D\psi_2 \rangle_{G_g} = \langle \psi_1, \psi_2 \rangle_W, 
\quad (D\psi)(t):=\psi'(t)
\end{equation}
of the hermitian forms 
in \cite[Proposition 3.1]{Su22} 
immediately leads to the following equivalence condition 
for the RH via Weil's criterion, 
since the differential operator $D$ gives a bijection from $C_c^\infty(\R)$ 
to the subspace of $C_c^\infty(\R)$ consisting of $\phi$ 
with $\int_{-\infty}^{\infty} \phi(t) \, dt=0$. 

\begin{corollary} \label{cor_2}
The RH is true if and only if the equality 
\begin{equation} \label{eq_0515_1}
\Vert \widehat{\mathcal{P}_{D\psi}} \Vert_{L^2(\R)}^2 = \langle \psi, \psi \rangle_{W}
\end{equation}
holds for all $\psi \in C_c^\infty(\R)$. 
\end{corollary}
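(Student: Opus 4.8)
The plan is to obtain Corollary~\ref{cor_2} as a direct translation of Theorem~\ref{thm_2}, using only the hermitian-form identity \eqref{eq_0515_4} of \cite[Proposition~3.1]{Su22} together with the elementary fact that $D = d/dt$ gives a bijection from $C_c^\infty(\R)$ onto the subspace $C_{c,0}^\infty(\R) := \{\phi \in C_c^\infty(\R) : \int_{-\infty}^{\infty}\phi(t)\,dt = 0\}$. No analytic input beyond that already invested in Proposition~\ref{prop_102} and Theorem~\ref{thm_2} is needed.

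For the implication ``RH $\Rightarrow$ \eqref{eq_0515_1}'', I would fix $\psi \in C_c^\infty(\R)$ and set $\phi := D\psi = \psi'$. Then $\phi \in C_c^\infty(\R)$ and $\int_{-\infty}^{\infty}\phi(t)\,dt = 0$ since $\psi$ is compactly supported, so $\widehat{\mathcal{P}_{D\psi}} = \widehat{\mathcal{P}_\phi} \in L^2(\R)$ by Proposition~\ref{prop_102}. Assuming the RH, Theorem~\ref{thm_2} applied to this mean-zero $\phi$ gives $\Vert \widehat{\mathcal{P}_{D\psi}} \Vert_{L^2(\R)}^2 = \langle D\psi, D\psi \rangle_{G_g}$, and \eqref{eq_0515_4} with $\psi_1 = \psi_2 = \psi$ rewrites the right-hand side as $\langle \psi, \psi \rangle_W$. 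This is exactly \eqref{eq_0515_1}.

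For the converse, I would run the same identities backwards. Suppose \eqref{eq_0515_1} holds for every $\psi \in C_c^\infty(\R)$. Combining this with \eqref{eq_0515_4} yields $\Vert \widehat{\mathcal{P}_{D\psi}} \Vert_{L^2(\R)}^2 = \langle D\psi, D\psi \rangle_{G_g}$ for all $\psi \in C_c^\infty(\R)$. Given an arbitrary $\phi \in C_{c,0}^\infty(\R)$, its antiderivative $\psi(t) := \int_{-\infty}^{t}\phi(s)\,ds$ is smooth, vanishes to the left of the support of $\phi$, and vanishes to the right of it as well because $\int_{-\infty}^{\infty}\phi = 0$; hence $\psi \in C_c^\infty(\R)$ and $D\psi = \phi$. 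Therefore \eqref{eq_0514_2} holds for every $\phi \in C_{c,0}^\infty(\R)$, which by Theorem~\ref{thm_2} forces the RH.

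There is essentially no obstacle left at this point, as all the substance resides in Theorem~\ref{thm_2} and in the identity \eqref{eq_0515_4}. The single point requiring care is that the antiderivative of a mean-zero compactly supported function is again compactly supported; this is precisely why Theorem~\ref{thm_2} is stated for mean-zero test functions, and it makes the substitution $\phi \leftrightarrow \psi = D^{-1}\phi$ lossless in both directions. I would also add the remark that combining Corollary~\ref{cor_2} with Weil's criterion, namely the equivalence between the RH and the non-negativity of $\langle \psi, \psi \rangle_W$ for all $\psi \in C_c^\infty(\R)$, exhibits $\Vert \widehat{\mathcal{P}_{D\psi}} \Vert_{L^2(\R)}^2$ as an explicit representation of Weil's non-negative hermitian form under the RH, which is the passage from inequalities to equalities emphasized in the introduction.
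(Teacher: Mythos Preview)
Your proposal is correct and follows essentially the same route the paper sketches in the paragraph preceding Corollary~\ref{cor_2}: combine Theorem~\ref{thm_2} with the identity \eqref{eq_0515_4} and the bijection $D:C_c^\infty(\R)\to C_{c,0}^\infty(\R)$. The only cosmetic difference is that the paper phrases the converse as going ``via Weil's criterion'' (i.e., \eqref{eq_0515_1} forces $\langle\psi,\psi\rangle_W\geq 0$, hence RH), whereas you route it through Theorem~\ref{thm_2}; since Theorem~\ref{thm_2}'s converse is itself a Weil-criterion argument, the two are equivalent.
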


A similar result was obtained in \cite[Theorem 1.3]{Su23a}, 
but Corollary \ref{cor_2} is simpler and more efficient as a statement. 
\medskip

In the following, 
we first prove one proposition (Proposition \ref{prop_201}) 
used in the proof of Proposition \ref{prop_1} and Theorem \ref{thm_1} 
in Section \ref{section_2}. 
Then, we prove Proposition \ref{prop_1} in the same section. 
After that, we prove Theorems \ref{thm_1} and \ref{thm_2} 
after preparing a result (Proposition \ref{prop_202}) on the theory of model spaces 
in Section \ref{section_3}. 
Finally, we mention two special values of $\mathfrak{S}_t(z)$ in Section \ref{section_4}. 
The strategy of the proof of Theorem \ref{thm_1} is similar to \cite{Su23}, 
however the computational details change. 
In \cite{Su23}, the analytic or geometric meaning of the functions 
that give the norms is unknown, but in this paper the functions that give the norms 
have the meaning as a screw line. 
Furthermore, as an advantage of using the screw line $\mathfrak{S}_t$, 
we obtain Theorem \ref{thm_2}, whose analogue was not obtained in \cite{Su23}. 

\section{ Unconditional results for $\mathfrak{S}_t$ }  \label{section_2}

\subsection{Expansion of $\mathfrak{P}_t(z)$ over the zeros} 
For the basic properties of the Riemann zeta-function, we refer to \cite{Tit86}. 
Let $\Gamma$ be the set of all zeros of $\xi(1/2-iz)$ without counting multiplicity. 
By two functional equations $\xi(s)=\xi(1-s)$ and $\xi(s)=\xi^\sharp(s)$, 
if $\gamma \in \Gamma$, then both $-\gamma$ and $\overline{\gamma}$ 
belong to $\Gamma$ with the same multiplicity.  
Also, $|\Im(\gamma)| < 1/2$ for every $\gamma \in \Gamma$, 
since all zeros of $\xi(s)$ lie in the strip $0 < \Re(s) < 1$. 
The RH is equivalent to all $\gamma \in \Gamma$ are real. 
For $E(z)$ of \eqref{eq_103}, we define 
\begin{equation}  \label{eq_201}
A(z) = (E(z)+\overline{E(\bar{z})})/2. 
\end{equation}
Then $A(z)=\xi(1/2-iz)$, because $\overline{E(\bar{z})}=\xi(1/2-iz)-\xi'(1/2-iz)$ 
by functional equations of $\xi(s)$. 
Therefore, $\Gamma$ coincides with the set of all zeros of both $A(z)$ and $1+\Theta(z)$. 
We define 
\begin{equation} \label{eq_202}
P_t(z) 
:= \sum_{\gamma \in \Gamma} m_\gamma \, \frac{e^{i\gamma t}-1}{\gamma}
\cdot
\frac{1}{z-\gamma}
\end{equation}
for $t \in \R_{\geq 0}$, 
where $m_\gamma$ is the multiplicity of $\gamma \in \Gamma$. 
For negative $t$, we set $P_{t}(z):=P_{-t}(z)$. 
The series on the right-hand side \eqref{eq_202} converges absolutely and uniformly 
on every compact subset of $\C\setminus\Gamma$, 
since $\sum_{\gamma \in \Gamma}m_\gamma|\gamma|^{-1-\delta}<\infty$ for any $\delta>0$, 
because $A(z)$ is an entire function of order one. 
Therefore, $P_t(z)$ is a meromorphic function on $\C$ 
with $\Gamma$ as the set of all poles.

\begin{proposition} \label{prop_201} 
Let $\mathfrak{P}_t(z)$ and $P_t(z)$ be meromorphic functions 
defined by \eqref{eq_105} and \eqref{eq_202}, respectively. 
Then, both coincide.
\end{proposition}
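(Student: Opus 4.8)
The plan is to realise $P_t(z)$ as a contour integral of $\frac{A'}{A}$ against an elementary kernel and then to let the terms of $\mathfrak{P}_t(z)$ emerge one at a time as residues. Fix $t\ge 0$ (the case $t<0$ reduces to this by the defining reflections), and note it suffices to prove the identity for $z$ in the open strip $|\Im(z)|<1/2$ off the discrete set $\Gamma\cup\{0,\pm i/2\}$, since both sides are meromorphic on $\C$. For such a $z$ choose $c>1/2$ and integrate $w\mapsto\frac{A'}{A}(w)\,\frac{e^{iwt}-1}{w(z-w)}$ around the boundary of $|\Im(w)|\le c$, truncated at $\Re(w)=\pm T$ with $T\to\infty$ through values bounded away from the zeros; the vertical sides are negligible because $\frac{A'}{A}(w)=O(\log|w|)$ along them for suitably chosen $T$ (the standard consequence of the Riemann--von Mangoldt bound on the count of zeros in a unit window), and the two horizontal integrals converge absolutely since $\frac{A'}{A}(x\pm ic)=O(\log|x|)$ (using $\xi(s)=\xi(1-s)$ on the lower line to reach $\Re(1/2-iw)>1$) against a kernel of size $O(|x|^{-2})$. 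As $\frac{A'}{A}$ has residue $m_\gamma$ at each $\gamma\in\Gamma$ and is holomorphic elsewhere, while the kernel has a single simple pole at $w=z$, the residue theorem (together with the absolute convergence noted after \eqref{eq_202}) gives
\[
P_t(z)=\frac{A'}{A}(z)\,\frac{e^{izt}-1}{z}+I,\qquad
I=\frac{1}{2\pi i}\Bigl(\int_{\Im w=-c}-\int_{\Im w=c}\Bigr)\frac{A'}{A}(w)\,\frac{e^{iwt}-1}{w(z-w)}\,dw .
\]

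Next I would fold the lower line onto the upper one by the substitution $w\mapsto-w$ and the functional equation $A(w)=A(-w)$, turning $I$ into the single integral $\frac1{2\pi i}\int_{\Im w=c}\frac{A'}{A}(w)\bigl[\frac{e^{-iwt}-1}{w(w+z)}+\frac{e^{iwt}-1}{w(w-z)}\bigr]dw$ along a line where $\Re(1/2-iw)>1$. There one writes
\[
\frac{A'}{A}(w)=-i\Bigl[\tfrac{1}{1/2-iw}+\tfrac{1}{-1/2-iw}-\tfrac12\log\pi+\tfrac12\tfrac{\Gamma'}{\Gamma}\bigl(\tfrac14-\tfrac{iw}{2}\bigr)\Bigr]+i\sum_{n\ge 1}\tfrac{\Lambda(n)}{\sqrt n}\,n^{iw},
\]
the last sum being the absolutely convergent Dirichlet series of $-i\frac{\zeta'}{\zeta}(1/2-iw)$. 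Multiplying through by the two kernels and closing the contour upward or downward according to the exponential factors $e^{\pm iwt}$, $n^{iw}$, one gets: the Dirichlet piece closes upward except on the terms $e^{iw(\log n-t)}$ with $n<e^{t}$, which close downward through $w=0,-z$ and reproduce exactly $-\sum_{n\le e^{t}}\frac{\Lambda(n)}{\sqrt n}\frac{e^{iz(t-\log n)}-1}{iz}$; the rational pieces close downward through the poles $w=0$, $w=-z$, $w=i/2$ (the putative pole at $w=-i/2$ cancels), producing the $e^{t/2}$, $e^{-t/2}$, $\log\pi$, $\frac{\Gamma'}{\Gamma}(1/4)$, $\frac{\Gamma'}{\Gamma}(1/4+iz/2)$ terms and the factors $(1\mp 2iz)^{-1}$; and the $\frac{\Gamma'}{\Gamma}(\tfrac14-\tfrac{iw}{2})$ piece, closed downward, additionally meets the digamma poles at $w=-i(\tfrac12+2n)$, $n\ge 1$, whose residues carry a geometric weight $e^{-2nt}$ and, summed via $\sum_{n\ge 0}\bigl[(n+a)^{-1}-(n+b)^{-1}\bigr]=\psi(b)-\psi(a)$, telescope into $-\frac{1}{2iz}e^{-t/2}\bigl[\Phi(e^{-2t},1,\tfrac14)-\Phi(e^{-2t},1,\tfrac12(\tfrac12+iz))\bigr]$. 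Comparing the value of $I$ found this way with
\[
\frac{A'}{A}(z)\,\frac{e^{izt}-1}{z}=\frac{e^{izt}-1}{iz}\Bigl[\tfrac{2}{1-2iz}-\tfrac{2}{1+2iz}+\tfrac{Z'}{Z}(1/2-iz)\Bigr],
\]
whose $\frac{Z'}{Z}$-summand cancels the like term of $\mathfrak{P}_t(z)$, yields $P_t(z)=\mathfrak{P}_t(z)$ on the strip, and analytic continuation removes the restriction on $z$.

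I expect the main obstacle to be bookkeeping: tracking signs and factors of $i$, and deciding term by term in which half-plane to close each elementary integral. The only genuinely analytic point is the vanishing of the arcs at infinity for the $\frac{\Gamma'}{\Gamma}\bigl(\tfrac14-\tfrac{iw}{2}\bigr)$ piece, which grows only logarithmically but whose poles march down the negative imaginary axis; this is handled by taking the closing semicircles through the gaps between consecutive poles, where Stirling's bound keeps the digamma function logarithmic, the same $O(n^{-2})$ decay of the residues also legitimising the summation that produces $\Phi$.
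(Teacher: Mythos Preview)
Your approach is sound and is essentially the Fourier-dual of the paper's argument. The paper proves the proposition by applying Weil's explicit formula (as quoted from Bombieri) to the test function
\[
\phi_{z,t}(x)=(iz)^{-1}\,e^{izx}\bigl(e^{izt}-e^{-iz\min(0,x)}\bigr)\mathbf{1}_{(-t,\infty]}(x),
\]
whose Fourier transform at $\gamma$ is exactly $\dfrac{e^{i\gamma t}-1}{\gamma(z-\gamma)}$; the spectral side then gives $P_t(z)$, and the remaining task is a real-variable evaluation of the archimedean and prime sums, which the paper carries out by splitting $\int_0^\infty=\int_0^t+\int_t^\infty$ and summing the resulting series into the Hurwitz--Lerch terms. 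You instead perform the contour shift that underlies the explicit formula directly: integrate $\frac{A'}{A}(w)\cdot\frac{e^{iwt}-1}{w(z-w)}$ around a strip, fold via $A(-w)=A(w)$, expand $\frac{A'}{A}$ into its polar/digamma/Dirichlet constituents on $\Im w=c>1/2$, and close each exponential piece in the half-plane dictated by its sign. The two computations are the same argument in different coordinates; the paper's is shorter because it cites a ready-made formula, while yours is self-contained but, as you note, heavier on bookkeeping.

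Two places in your sketch deserve care when you write it out in full. First, after splitting the kernel you must track the poles $w=z$ (from $\frac{e^{iwt}-1}{w(w-z)}$) as well as $w=-z$; your summary mentions only $0,-z,i/2$, but the constant pieces of the second kernel closed downward also meet $w=z$, and those residues are needed to assemble the $\frac{\Gamma'}{\Gamma}(\tfrac14+\tfrac{iz}{2})$ and $\log\pi$ terms with the correct coefficients. Second, the cancellation you invoke at $w=-i/2$ is between the $n=0$ digamma pole and the $\frac{1}{1/2-iw}$ pole, so your residue sum over digamma poles runs from $n\ge 1$, whereas the $\Phi$-series in \eqref{eq_105} start at $n=0$; the missing $n=0$ terms do appear, but they come from the residues of the rational pieces at $w=\pm z$ and $w=0$ rather than from the digamma tower, so the identification with $\Phi(e^{-2t},1,\tfrac14)$ and $\Phi(e^{-2t},1,\tfrac12(\tfrac12+iz))$ needs one more regrouping step than your sentence suggests. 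None of this is a genuine obstruction; it is precisely the sign-and-index bookkeeping you anticipated.
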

\begin{proof} For $t \geq 0$ and $z \in \C$ with $\Im(z)>0$, we define 
\[
\phi_{z,t}(x) = (iz)^{-1} \,e^{izx} (e^{izt}-e^{-iz\min(0,x)})\,\mathbf{1}_{(-t,\infty]}(x), 
\]
where $\mathbf{1}_A(x)=1$ if $x \in A$ and $\mathbf{1}_A(x)=0$ otherwise. 
The main tool for the proof is Weil's explicit formula
\begin{equation*} 
\aligned 
\lim_{X \to \infty} & \sum_{{\gamma \in \Gamma}\atop{|\gamma|\leq X}} m_\gamma
\int_{-\infty}^{\infty} \phi(x) \, e^{-i\gamma x} \, dx \\
& = \int_{-\infty}^{\infty} \phi(x) (e^{x/2}+e^{-x/2}) dx 
 - \sum_{n=1}^{\infty} \frac{\Lambda(n)}{\sqrt{n}} \phi(\log n) 
- \sum_{n=1}^{\infty} \frac{\Lambda(n)}{\sqrt{n}} \phi(-\log n)  \\
& \quad 
- (\log 4\pi + \gamma_0) \phi(0)
- \int_{0}^{\infty} \left\{ \phi(x) + \phi(-x)-2e^{-x/2} \phi(0)\right\} \frac{e^{x/2}dx}{e^{x}-e^{-x}}
\endaligned 
\end{equation*}
which is obtained from the explicit formula in \cite[p. 186]{Bo01} 
by taking $\phi(t) = e^{t/2}f(e^t)$ for test functions $f(x)$ in that formula
with the conditions for $f(x)$ in \cite[Section 3]{BoLa99}, 
where $\gamma_0$ is the Euler--Mascheroni constant. 
(Note that the formula in \cite{BoLa99} has two typographical errors 
in the second line of the right-hand side.) 

As is easily seen, Weil's explicit formula can be applied to $\phi(x)=\phi_{z,t}(x)$. 
We have  
\[
\int_{-\infty}^{\infty} \phi_{z,t}(x) \, e^{-i\gamma x} \, dx 
= \frac{e^{i\gamma t}-1}{\gamma}
\cdot
\frac{1}{z-\gamma} 
\quad \text{when $\Im(z)>\Im(\gamma)$}. 
\]
Therefore, the left-hand side of Weil's explicit formula for $\phi_{z,t}(x)$ 
gives $P_t(z)$ of \eqref{eq_202} when $\Im(z)>1/2$. 
Hence, if it is shown that the right-hand side 
is equal to $\mathfrak{P}_t(z)$ for $\Im(z)>1/2$, 
then the conclusion of the proposition follows by analytic continuation. 

It is easy to verify
\[
\int_{-\infty}^{\infty} \phi_{z,t}(x) (e^{x/2}+e^{-x/2}) dx 
= \frac{4(e^{t/2}-1)}{1-2iz} + \frac{4(e^{-t/2}-1)}{1+2iz}
\]
and 
\[
\aligned 
\sum_{n=1}^{\infty} \frac{\Lambda(n)}{\sqrt{n}} \, & \phi_{z,t}(\log n) 
=\frac{e^{izt}-1}{iz} \sum_{n=1}^{\infty} \frac{\Lambda(n)}{n^{1/2-iz}}
= - \frac{e^{izt}-1}{iz} \frac{\zeta'}{\zeta}\left( \frac{1}{2}-iz \right), \\
\sum_{n=1}^{\infty} \frac{\Lambda(n)}{\sqrt{n}} \, & \phi_{z,t}(-\log n) 
= \sum_{n \leq e^{t}} \frac{\Lambda(n)}{\sqrt{n}} \frac{e^{iz(t-\log n)}- 1}{iz}
\endaligned 
\]
for $\Im(z)>1/2$ by direct calculation. 

Therefore, the remaining task is to calculate the fifth term on the right-hand side. 
We split it into $\int_{t}^{\infty}$ and $\int_{0}^{t}$ and calculate each integral. 
For the first part, 
\[
\aligned 
\int_{t}^{\infty} & \left\{ \phi_{z,t}(x) + \phi_{z,t}(-x)-2e^{-x/2} \phi_{z,t}(0)\right\} 
\frac{e^{x/2}dx}{e^{x}-e^{-x}} \\
& = \frac{e^{izt}-1}{iz}  
\int_{t}^{\infty} (e^{izx} -2e^{-x/2})\frac{e^{x/2}dx}{e^{x}-e^{-x}} \\
& = \frac{e^{izt}-1}{iz}  
\int_{t}^{\infty} e^{izx} \,e^{-x/2} \sum_{n=0}^{\infty}e^{-2nx} \, dx
- \frac{e^{izt}-1}{iz}  \log \frac{e^t+1}{e^t-1} \\
& = \frac{e^{izt}-1}{iz} 
\left[ \frac{1}{2}\,e^{-t(\frac{1}{2}-iz)}
\sum_{n=0}^{\infty} \frac{e^{-2nt}}{n+\frac{1}{2}(\frac{1}{2}-iz)}
-  \log\coth(e^{t/2}) \right] \\
& = \frac{e^{izt}-1}{iz} 
\left[ \frac{1}{2}\,e^{-t(\frac{1}{2}-iz)}
\Phi(e^{-2t},1,\tfrac{1}{2}(\tfrac{1}{2}-iz))
-  \log\coth(e^{t/2})\right] . 
\endaligned 
\]
For the second part, 
\[
\aligned 
\int_{0}^{t} & \left\{ \phi_{z,t}(x) + \phi_{z,t}(-x)-2e^{-x/2} \phi_{z,t}(0)\right\} \frac{e^{x/2}dx}{e^{x}-e^{-x}} \\
& = \frac{1}{iz}\int_{0}^{t} \left\{ 
(e^{iz(t-x)}-1) + (e^{izx}-2e^{-x/2}) (e^{izt}-1)
\right\} \,e^{-x/2} \sum_{n=0}^{\infty} e^{-2nx}\, dx.
\endaligned 
\]
To handle the first half of this right-hand side, we calculate as 
\[
\aligned 
\int_{0}^{t} 
(e^{iz(t-x)}-1) & \,e^{-x/2} \sum_{n=0}^{N} e^{-2nx}\, dx \\
& = \frac{1}{2}
\sum_{n=0}^{N} \frac{e^{itz}-e^{-t/2}e^{-2nt}}{n+\frac{1}{2}(\frac{1}{2}+iz)}
- \frac{1}{2}\sum_{n=0}^{N} \frac{1-e^{-\frac{1}{2}(1+4n)t}}{n+\frac{1}{4}} \\
& = \frac{1}{2}e^{itz}
\sum_{n=0}^{N} \frac{1}{n+\frac{1}{2}(\frac{1}{2}+iz)}
-\frac{1}{2}e^{-t/2}\Phi(e^{-2t},1,\tfrac{1}{2}(\tfrac{1}{2}+iz)) \\
& \quad - \frac{1}{2}\sum_{n=0}^{N} \frac{1}{n+\frac{1}{4}} 
+ \frac{1}{2}e^{-t/2}\Phi(e^{-2t},1,1/4)+O(e^{-2Nt}), 
\endaligned 
\]
where the implied constant depends on $t$ and $z$. 
To handle the second half of the right-hand side, we calculate as 
\[
\aligned 
\int_{0}^{t} & (e^{izx}-2e^{-x/2})\,e^{-x/2} \sum_{n=0}^{N} e^{-2nx}\, dx \\
&= 
\frac{1}{2} \sum_{n=0}^{N} \frac{1-e^{-t(\frac{1}{2}-iz)}e^{-2nt}}{n+\frac{1}{2}(\frac{1}{2}-iz)}
- 
\sum_{n=0}^{N} \frac{1-e^{-t}e^{-2nt}}{n+\frac{1}{2}} \\
& = 
\frac{1}{2} \sum_{n=0}^{N} \frac{1}{n+\frac{1}{2}(\frac{1}{2}-iz)}
-
\frac{1}{2} e^{-t(\frac{1}{2}-iz)} \Phi(e^{-2t},1,\tfrac{1}{2}(\tfrac{1}{2}-iz))\\
& \quad - 
\sum_{n=0}^{N} \frac{1}{n+\frac{1}{2}}
+ \log\coth(e^{t/2})+O(e^{-2Nt}), 
\endaligned 
\]
where we used the series expansion of ${\rm arctanh}(e^{-t})=2^{-1}\log\coth(e^{t/2})$ 
and  the implied constant depends on $t$ and $z$. 

By the above preliminary calculations and the well-known series expansion
\begin{equation} \label{eq_203} 
\frac{\Gamma'}{\Gamma}(w) = -\gamma_0 - \sum_{n=0}^{\infty}
\left( \frac{1}{w+n} - \frac{1}{n+1} \right), 
\end{equation}
we obtain
\[
\aligned 
iz\int_{0}^{t} & \left\{ \phi_{z,t}(x) + \phi_{z,t}(-x)-2e^{-x/2} \phi_{z,t}(0)\right\} 
\frac{e^{x/2}dx}{e^{x}-e^{-x}} \\
& = 
\frac{1}{2}e^{-t/2}\Phi(e^{-2t},1,1/4) 
-\frac{1}{2}e^{-t/2}\Phi(e^{-2t},1,\tfrac{1}{2}(\tfrac{1}{2}+iz)) \\
& \quad -
(e^{izt} -1)
\frac{1}{2} e^{-t(\frac{1}{2}-iz)} \Phi(e^{-2t},1,\tfrac{1}{2}(\tfrac{1}{2}-iz)) 
+  (e^{izt} -1)\log\coth(e^{t/2}) \\
& \quad +
(e^{itz}-1)\lim_{N \to \infty}\left[ 
\frac{1}{2}
\sum_{n=0}^{} \frac{1}{n+\frac{1}{2}(\frac{1}{2}+iz)} 
+ 
\frac{1}{2} \sum_{n=0}^{N} \frac{1}{n+\frac{1}{2}(\frac{1}{2}-iz)}
 -
\sum_{n=0}^{N} \frac{1}{n+\frac{1}{2}} 
\right] \\
& \quad 
+ \frac{1}{2} \lim_{N\to\infty} 
\left[\,  
\sum_{n=0}^{N} \frac{1}{n+\frac{1}{2}(\frac{1}{2}+iz)} 
- \sum_{n=0}^{N} \frac{1}{n+\frac{1}{4}} 
\,\right] \\
& = 
\frac{1}{2}e^{-t/2}\Phi(e^{-2t},1,1/4) 
-\frac{1}{2}e^{-t/2}\Phi(e^{-2t},1,\tfrac{1}{2}(\tfrac{1}{2}+iz)) \\
& \quad -
(e^{izt} -1)
\frac{1}{2} e^{-t(\frac{1}{2}-iz)} \Phi(e^{-2t},1,\tfrac{1}{2}(\tfrac{1}{2}-iz)) 
+ (e^{izt} -1)\log\coth(e^{t/2})\\
& \quad +
(e^{itz}-1) \frac{1}{2}\left[
2\frac{\Gamma'}{\Gamma}\left(\frac{1}{2}\right)
-
\frac{\Gamma'}{\Gamma}\left(\frac{1}{4}+\frac{iz}{2}\right)
-
\frac{\Gamma'}{\Gamma}\left(\frac{1}{4}-\frac{iz}{2}\right)
\right] \\
& \quad 
+ \frac{1}{2}\left[
\frac{\Gamma'}{\Gamma}\left(\frac{1}{4}\right)
-
\frac{\Gamma'}{\Gamma}\left(\frac{1}{4}+\frac{iz}{2}\right)
\right].
\endaligned 
\]
Combining the results for $\int_{t}^{\infty}$ and $\int_{0}^{t}$, 
\[
\aligned 
\int_{0}^{\infty} & \left\{ \phi_{z,t}(x) + \phi_{z,t}(-x)-2e^{-x/2} \phi_{z,t}(0) 
\right\} \frac{e^{x/2}dx}{e^{x}-e^{-x}} \\
& = \frac{1}{2iz}e^{-t/2}
\Bigl[ \Phi(e^{-2t},1,1/4) -\Phi(e^{-2t},1,\tfrac{1}{2}(\tfrac{1}{2}+iz)) \Bigr] \\
& \quad +
\frac{e^{itz}-1}{iz} \left[
\frac{\Gamma'}{\Gamma}\left(\frac{1}{2}\right)
-\frac{1}{2}
\frac{\Gamma'}{\Gamma}\left(\frac{1}{4}+\frac{iz}{2}\right)
-\frac{1}{2}
\frac{\Gamma'}{\Gamma}\left(\frac{1}{4}-\frac{iz}{2}\right)
\right] \\
& \quad 
+ \frac{1}{2iz}\left[
\frac{\Gamma'}{\Gamma}\left(\frac{1}{4}\right)
-
\frac{\Gamma'}{\Gamma}\left(\frac{1}{4}+\frac{iz}{2}\right)
\right].
\endaligned 
\]
Finally, noting the special value  $(\Gamma'/\Gamma)(1/2)=-\gamma_0 - 2\log 2$, 
we conclude that the right-hand side of Weil's explicit formula 
for $\phi_{z,t}(x)$ is equal to \eqref{eq_105}.
\end{proof}

\subsection{Proof of Proposition \ref{prop_1}} 

%
We have $|\Theta(z)|=1$ for every $z \in \R$ by definition. 
In fact, zeros of $E(z)$ in the denominator cancel out in the numerator $\overline{E(\bar{z})}$, 
even if they exist. 
Further, $\mathfrak{P}_t(z)$ has poles of order one at $\gamma \in \Gamma$, 
but $\mathfrak{S}_t(z)$ is holomorphic there, since 
$(1+\Theta(z))/2 = A(z)/E(z) = A(z)/(A(z)+iA'(z)) = (z-\gamma)(-i/m_\gamma+o(1)) 
$ near $z=\gamma$ by direct calculation.  
Hence, $\mathfrak{S}_t(z)$ is bounded and holomorphic on the real line 
by \eqref{eq_104}, \eqref{eq_202}, and Proposition \ref{prop_201}. 
On the other hand, in the horizontal strip $|\Im(z)|\leq 1/2$, 
we have the well-known estimate $(\Gamma'/\Gamma)(1/4+iz/2) \ll \log |z|$ 
and 
\[
\frac{\zeta'}{\zeta}\left(\frac{1}{2}-iz \right)
= \sum_{|\Re(z)-\gamma| \leq 1} \frac{i}{z-\gamma}+O(\log|z|)
\]
by \cite[Theorem 9.6 (A)]{Tit86}. 
In both estimates, implied constants are uniform in $|\Im(z)|\leq 1/2$. 
The number of zeros $\gamma \in \Gamma$ satisfying $|\Re(z)-\gamma| \leq 1$ 
is $O(\log |z|)$ counting with multiplicity by \cite[Theorem 9.2]{Tit86}. 
Therefore, 
$\mathfrak{S}_t(z) \ll |z|^{-1}\log |z|$ as $|z| \to \infty$ 
with an implied constant depending on a compact set of $t$ by \eqref{eq_105}. 
Hence $\mathfrak{S}_t(z)$ belongs to $L^2(\R)$ 
and the norm is uniformly bounded on a compact set of $t$. 
\hfill $\Box$

\section{Proof of the main results} \label{section_3}

\subsection{Preparation on the theory of the model spaces} 
For this part, we refer to \cite[Section 3.1]{Su23}, including precise definitions of notions.
Let $\mathbb{H}^2$ be the Hardy space on the upper half-plane. 
As usual, 
we identify $\mathbb{H}^2$ with a closed subspace of $L^2(\R)$ via boundary values. 
Then, the inner product of $\mathbb{H}^2$ coincides with the standard inner product of $L^2(\R)$. 

Assuming the RH is true, 
$E(z)$ of \eqref{eq_102} is an entire function 
satisfying $|E(\bar{z})|<|E(z)|$ if $\Im(z)>0$ (\cite[Theorem 1]{La06}). 
Therefore, it generates the de Branges space $\mathcal{H}(E)$, 
which is a Hilbert space of entire functions isomorphic 
to the model subspace 
$\mathcal{K}(\Theta):=\mathbb{H}^2 \ominus \Theta \mathbb{H}^2$ 
by the mapping $F(z) \mapsto F(z)/E(z)$ from $\mathcal{H}(E)$ into $\mathbb{H}^2$, 
where $\Theta(z)$ is the meromorphic function defined in \eqref{eq_103}. 
The model subspace $\mathcal{K}(\Theta)$ 
is a subspace of $L^2(\R)$ as a Hilbert space. 
In particular, the inner product of $\mathcal{K}(\Theta)$ matches that of $L^2(\R)$.

\begin{proposition} \label{prop_202}
Assuming the RH is true, the family 
\begin{equation} \label{eq_204}
F_\gamma(z) :=
\sqrt{\frac{m_\gamma}{\pi}} \frac{i(1+\Theta(z))}{2(z-\gamma)}, \quad \gamma \in \Gamma
\end{equation}
forms an orthonormal basis of the Hilbert space $\mathcal{K}(\Theta)$. 
\end{proposition}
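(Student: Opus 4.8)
The plan is to realize each $F_\gamma$ as (a normalization of) a boundary reproducing kernel of the model space $\mathcal{K}(\Theta)$ at the point $\gamma$, to verify orthonormality and the reproducing property by residue calculus, and then to deduce completeness from a tight-frame identity on the family of interior reproducing kernels.

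\emph{Set-up and membership.} By \eqref{eq_201} one has $1+\Theta(z)=2A(z)/E(z)$ with $A(z)=\xi(1/2-iz)$, so $\Gamma$ is exactly the zero set of $1+\Theta$ and $\Theta(\gamma)=-1$ for every $\gamma\in\Gamma$. Writing $E=A+iA'$, so that $\Theta=(A-iA')/(A+iA')$, and expanding at a zero $\gamma$ of $A$ of order $m_\gamma$ gives the local expansion $\Theta(z)=-1-\tfrac{2i}{m_\gamma}(z-\gamma)+O((z-\gamma)^2)$; in particular $1+\Theta$ vanishes to exact order $1$ at $\gamma$ and $\Theta'(\gamma)=-2i/m_\gamma$. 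Consequently $(1+\Theta(z))/(z-\gamma)$ is holomorphic and bounded on $\overline{\mathbb{C}_+}$ and is $O(1/|z|)$ at infinity, so $F_\gamma\in\mathbb{H}^2$; and since $|\Theta|=1$ on $\R$ a direct check gives $\Theta(x)\overline{F_\gamma(x)}=-F_\gamma(x)\in\mathbb{H}^2$ for $x\in\R$, so by the characterization $\mathcal{K}(\Theta)=\{f\in\mathbb{H}^2:\Theta\bar f\in\mathbb{H}^2\}$ we get $F_\gamma\in\mathcal{K}(\Theta)$. I will write $k_w$ for the reproducing kernel of $\mathcal{K}(\Theta)$ at an interior point $w$ (cf.\ \cite[\S3.1]{Su23}), i.e.\ $k_w(z)=\tfrac{i}{2\pi}(1-\overline{\Theta(w)}\Theta(z))/(z-\bar w)$, with $\Vert k_w\Vert_{L^2(\R)}^2=k_w(w)=(1-|\Theta(w)|^2)/(4\pi\,\Im w)$.

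\emph{Orthonormality.} For $\gamma,\gamma'\in\Gamma$, $\langle F_\gamma,F_{\gamma'}\rangle_{L^2(\R)}=\tfrac{\sqrt{m_\gamma m_{\gamma'}}}{4\pi}\int_{\R}\tfrac{|1+\Theta(x)|^2}{(x-\gamma)(x-\gamma')}\,dx$, which I evaluate by residues. If $\gamma\neq\gamma'$ the integrand is continuous (the factors $1+\Theta$ cancel the real poles), and writing $|1+\Theta(x)|^2=2+\Theta(x)+1/\Theta(x)$ on $\R$ I split into three pieces: the constant piece integrates to $0$, for the $\Theta(x)$-piece I close the contour in $\mathbb{C}_+$ (where $\Theta$ is holomorphic, bounded and vanishes at $i\infty$) and for the $1/\Theta(x)$-piece in $\mathbb{C}_-$ (where $1/\Theta=E/E^*$ has the analogous properties); the large semicircle is negligible by the $O(1/R^2)$ decay of the rational factor, and the half-residues at the real poles $\gamma,\gamma'$ cancel because $\Theta(\gamma)=\Theta(\gamma')=-1$, so $\langle F_\gamma,F_{\gamma'}\rangle=0$. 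If $\gamma=\gamma'$, then $|1+\Theta(x)|^2=2\,\Re(1+\Theta(x))$ and $\langle F_\gamma,F_\gamma\rangle=\tfrac{m_\gamma}{2\pi}\,\Re\!\left(\mathrm{PV}\!\int_{\R}\tfrac{1+\Theta(x)}{(x-\gamma)^2}\,dx\right)=\tfrac{m_\gamma}{2\pi}\,\Re(\pi i\,\Theta'(\gamma))=1$, closing in $\mathbb{C}_+$ and using $\operatorname{Res}_{z=\gamma}(1+\Theta(z))/(z-\gamma)^2=\Theta'(\gamma)=-2i/m_\gamma$. The same type of computation with $k_w$ in place of $F_{\gamma'}$ gives $\langle F_\gamma,k_w\rangle_{L^2(\R)}=F_\gamma(w)$.

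\emph{Completeness.} For $w\in\mathbb{C}_+$,
\[
\sum_{\gamma\in\Gamma}\bigl|\langle k_w,F_\gamma\rangle_{L^2(\R)}\bigr|^2=\sum_{\gamma\in\Gamma}|F_\gamma(w)|^2=\frac{|1+\Theta(w)|^2}{4\pi}\sum_{\gamma\in\Gamma}\frac{m_\gamma}{|w-\gamma|^2}.
\]
Since $A$ is an even real entire function of order $\le 1$ whose zero set is $\Gamma$ with multiplicities $m_\gamma$ (real under the RH), $A(w)=A(0)\prod_{\gamma\in\Gamma,\,\gamma>0}(1-w^2/\gamma^2)^{m_\gamma}$, whence $\Re\!\bigl(iA'(w)/A(w)\bigr)=\Im(w)\sum_{\gamma\in\Gamma}m_\gamma/|w-\gamma|^2$; on the other hand $iA'(w)/A(w)=(1-\Theta(w))/(1+\Theta(w))$, whose real part is $(1-|\Theta(w)|^2)/|1+\Theta(w)|^2$. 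Substituting, $\sum_{\gamma\in\Gamma}|F_\gamma(w)|^2=(1-|\Theta(w)|^2)/(4\pi\,\Im w)=\Vert k_w\Vert_{L^2(\R)}^2$. Because $\{F_\gamma\}$ is orthonormal, the left-hand side equals $\Vert Pk_w\Vert^2$, where $P$ is the orthogonal projection of $\mathcal{K}(\Theta)$ onto $\overline{\operatorname{span}}\{F_\gamma:\gamma\in\Gamma\}$; thus $\Vert Pk_w\Vert=\Vert k_w\Vert$ for all $w\in\mathbb{C}_+$, and since $\{k_w\}$ is total in $\mathcal{K}(\Theta)$ and $P$ is bounded, $P$ is the identity. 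Hence $\{F_\gamma\}_{\gamma\in\Gamma}$ is an orthonormal basis. (Alternatively, the completeness step can be bypassed: $\Theta$ is a meromorphic inner function, so its Aleksandrov--Clark measure at $-1$ is purely atomic and supported on $\Gamma$, and Clark's theorem identifies the normalized boundary reproducing kernels $k_\gamma/\Vert k_\gamma\Vert=F_\gamma$ as an orthonormal basis of $\mathcal{K}(\Theta)$.)

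The step requiring the most care is the residue evaluation of the inner products: justifying the contour closures (the rational factors supply the $O(1/R^2)$ decay, and $\Theta$, resp.\ $1/\Theta$, is bounded and vanishes at $i\infty$, resp.\ $-i\infty$) and tracking the half-residues at the real poles. The identity $iA'(w)/A(w)=(1-\Theta(w))/(1+\Theta(w))$ together with the termwise Hadamard expansion of $A$ is the other point needing attention; the rest is routine bookkeeping.
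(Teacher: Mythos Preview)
Your argument is correct and self-contained, whereas the paper does not prove this proposition at all: it simply refers the reader to \cite[Proposition~3.1]{Su23}. So there is no ``paper's proof'' to compare with in detail; what you have written is a genuine proof where the paper only cites one. Your two routes---the direct residue/tight-frame computation and the Aleksandrov--Clark alternative---are both standard in model-space theory, and the Clark-theory remark is in fact the conceptual reason the result holds: since $\Theta$ is a meromorphic inner function and $\{\Theta=-1\}=\Gamma$, the Clark measure $\sigma_{-1}$ is purely atomic with mass $m_\gamma/\pi$ at each $\gamma$ (because $|\Theta'(\gamma)|=2/m_\gamma$), and Clark's unitary $\mathcal{K}(\Theta)\to L^2(\sigma_{-1})$ sends $F_\gamma$ to the indicator of $\{\gamma\}$, normalized. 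This is almost certainly the content of the cited \cite{Su23}.

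One small inaccuracy to fix: you claim that $\Theta$ ``vanishes at $i\infty$'' (and $1/\Theta$ at $-i\infty$). That is not true here---writing $\Theta=(1-\xi'/\xi)/(1+\xi'/\xi)$ and using $(\xi'/\xi)(\sigma)\to+\infty$ as $\sigma\to+\infty$ one sees $\Theta(iy)\to-1$. Fortunately you do not need this: boundedness of $\Theta$ in $\mathbb{C}_+$ (resp.\ of $1/\Theta$ in $\mathbb{C}_-$), which follows from the inner property, together with the $O(R^{-2})$ decay of the rational factor, already kills the large semicircle. Likewise, in the orthogonality computation for $\gamma\neq\gamma'$ your wording ``the half-residues \dots\ cancel because $\Theta(\gamma)=\Theta(\gamma')=-1$'' is slightly imprecise: each of the three pieces ($2$, $\Theta$, $1/\Theta$) has its own pair of half-residues at $\gamma,\gamma'$ summing to zero, and for the $\Theta$ and $1/\Theta$ pieces this uses $\Theta(\gamma)=\Theta(\gamma')=-1$; the cancellation is within each piece, not across pieces. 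These are cosmetic points; the logic is sound.
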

\begin{proof} 
See \cite[Proposition 3.1]{Su23}. 
\end{proof}

\subsection{Proof of Theorem \ref{thm_1}} 

By Proposition \ref{prop_201}, we have 
\begin{equation} \label{eq_205} 
\mathfrak{S}_t(z)
= 
\sum_{\gamma \in \Gamma} \sqrt{m_\gamma} \, \frac{e^{i\gamma t}-1}{\gamma} \, F_\gamma(z) 
\end{equation}
unconditionally. Further, the coefficients on the right-hand side 
converge in $L^2$-sense:
\[
\sum_{\gamma \in \Gamma}  \left|\sqrt{m_\gamma} 
\frac{e^{i\gamma t}-1}{\gamma}
 \right|^2 
\leq \sum_{\gamma \in \Gamma} 
\frac{m_\gamma}{|\gamma|^2} < \infty. 
\]

Therefore, 
assuming the RH and applying Proposition \ref{prop_202} 
to $\mathfrak{S}_t(z)$ via formula \eqref{eq_205}, 
we find that 
$\mathfrak{S}_t(z)$ belongs to the subspace $\mathcal{K}(\Theta)$ of $L^2(\R)$ and 
\begin{equation} \label{eq_0512}
\langle \mathfrak{S}_{t+u}-\mathfrak{S}_{u}, \mathfrak{S}_{s+u}-\mathfrak{S}_{u} \rangle_{L^2(\R)}
= 
\sum_{\gamma \in \Gamma} m_\gamma \, 
\frac{e^{i\gamma t}-1}{\gamma}\cdot\frac{e^{-i\gamma s}-1}{\gamma}
\end{equation}
holds. The right-hand side is equal to $G_g(t,s)$ by \cite[(1.9)]{Su22}.  
Hence, $\mathfrak{S}_t:\R \to L^2(\R)$ 
is a screw line of $g(t)$ under the RH. 

We find that $\mathfrak{S}_0(z)$ is identically zero by \eqref{eq_104} and \eqref{eq_105}, 
since 
\[
\lim_{t  \to 0} (\Phi(e^{-2t},1,1/4) -\Phi(e^{-2t},1,(1/2+iz)/2)) = 
-
\frac{\Gamma'}{\Gamma}\left(1/4\right)
+
\frac{\Gamma'}{\Gamma}\left((1/2+iz)/2\right) 
\]
by \eqref{eq_203}. Therefore, by taking $u=0$ in \eqref{eq_0512}, 
\begin{equation} \label{eq_301}
\aligned 
\Vert \mathfrak{S}_t \Vert_{L^2(\R)}^2 
& = \sum_{\gamma \in \Gamma} m_\gamma 
\left| \frac{e^{i\gamma t}-1}{\gamma} \right|^2 
=  2 \sum_{\gamma \in \Gamma} m_\gamma \, \frac{1-\cos(\gamma t)}{\gamma^2}. 
\endaligned 
\end{equation}
On the other hand, 
\begin{equation} \label{eq_302} 
-g(t)= \sum_{\gamma \in \Gamma} m_\gamma\, \frac{1-\cos(\gamma t)}{\gamma^2}
\end{equation}
by \cite[Theorem 1.1 (2)]{Su22}. 
Hence equality \eqref{eq_107} follows from \eqref{eq_301} and \eqref{eq_302}. 
\hfill $\Box$

\subsection{Proof of Corollary \ref{cor_1}}  

Theorem \ref{thm_1} states that \eqref{eq_107} is a necessary condition for the RH. 
Therefore, it suffices to show that \eqref{eq_107} is a sufficient condition for the RH.

We suppose that equality \eqref{eq_107} holds for all $t \geq t_0$.  
Then $-g(t)$ is nonnegative on $[t_0,\infty)$, which implies 
that  the RH is true by \cite[Theorems 1.7 and 11.1]{Su22}. 
\hfill $\Box$

\subsection{Proof of Theorem \ref{thm_2}} 

First, we show equation \eqref{eq_0514_2} 
assuming that the RH is true.  
For any $\phi \in C_c^\infty(\R)$, we have 
\begin{equation*} 
\widehat{\mathcal{P}_\phi}(z)
= 
\sum_{\gamma \in \Gamma} \sqrt{m_\gamma} \, 
\frac{\widehat{\phi}(\gamma)-\widehat{\phi}(0)}{\gamma} \, F_\gamma(z) 
\end{equation*}
by \eqref{eq_205}, where $\widehat{\phi}(z) := \int_{-\infty}^{+\infty} \phi(t) \,e^{izt} \, dt$. 
Therefore, 
\begin{equation} \label{eq_0515_3} 
\Vert \widehat{\mathcal{P}_\phi} \Vert_{L^2(\R)}^2 
= \sum_{\gamma \in \Gamma} m_\gamma
\left\vert \frac{\widehat{\phi}(\gamma)-\widehat{\phi}(0)}{\gamma} \right\vert^2
\end{equation}
by Proposition \ref{prop_202}. Applying 
\begin{equation*}
G_g(t,u) 
 = \sum_{\gamma} \frac{(e^{i\gamma t}-1)(e^{-i\gamma u}-1)}{\gamma^2}
\end{equation*}
in \cite[(1.9)]{Su22} to \eqref{eq_0515_2} 
and noting the symmetry $\gamma \mapsto -\gamma$, 
we find that the right-hand side of \eqref{eq_0515_3}  is equal to 
$\langle \phi, \phi \rangle_{G_g}$. 

Conversely, we show that the RH is true assuming equality \eqref{eq_0514_2}. 
We show that a contradiction arises if the RH is false.  
We take a non-real $\gamma_0 \in \Gamma$. 
For any $\epsilon>0$, there exists $\psi_1$, $\psi_2\in C_c^\infty(\R)$ 
such that $\widehat{\psi_1}(\gamma_0)=i$, 
$\widehat{\psi_2}(\overline{\gamma_0})=-i$, 
$|\widehat{\psi_1}(\gamma)| \leq \epsilon |\gamma_0-\gamma|^{-1-\delta}$ 
for every $\gamma \in \Gamma\setminus\{\gamma_0\}$, 
and  
$|\widehat{\psi_2}(\gamma)| \leq \epsilon |\overline{\gamma_0}-\gamma|^{-1-\delta}$ 
for every $\gamma \in \Gamma\setminus\{\overline{\gamma_0}\}$ by \cite[Lemma 1]{Yo92}. 
We define $\psi:=\psi_1+\psi_2 \,(\not=0)$ and $\phi:=D\psi$.  
Then, $\widehat{\phi}(0)=0$ by definition and 
$
\langle \phi, \phi \rangle_{G_g} 
=
\langle \psi, \psi \rangle_W 
$
by \eqref{eq_0515_4}. The right-hand side is equal to 
$
\sum_{\gamma \in \Gamma} m_\gamma 
\widehat{\psi}(\gamma)\overline{\widehat{\psi}(\bar{\gamma})}
= -m_{\gamma_0} + O(\epsilon)
$,  
since $\sum_{\gamma \in \Gamma} m_\gamma |\gamma|^{-1-\delta}<\infty$. 
Therefore, $\langle \phi, \phi \rangle_{G_g}$ is negative for a sufficiently small $\epsilon>0$, 
but it contradicts the non-negativity that follows from \eqref{eq_0514_2}. 
Hence the RH is true. 
\hfill $\Box$

\section{Special values of the screw line $\mathfrak{S}_t(z)$} \label{section_4} 

The screw line $\mathfrak{S}_t(z)$ 
has the following unconditional relations with the screw function $g(t)$. 
It is interesting that they are not a special case of equations 
obtained from the general theory of screw functions.

\begin{theorem} 
Let $g(t)$ and $\mathfrak{P}_t(z)$ 
be functions of \eqref{eq_101} and \eqref{eq_105}, respectively.  
Then the following equations hold independently of the truth of the RH: 
\begin{equation} \label{eq_401}
\mathfrak{P}_t(0) = -g(t), \\
\end{equation}
\begin{equation} \label{eq_402}
\lim_{y\to + \infty} \left[ y\, \mathfrak{B}_t(iy) -\frac{1}{2}
\frac{\Gamma'}{\Gamma}\left(\frac{1}{4}+\frac{y}{2} \right) 
+\frac{1}{2} \log \pi
\right] = -g'(t),  
\end{equation}
where we assume $t \not = \log n$ for any $n \in \N$ in \eqref{eq_402}.  
\end{theorem}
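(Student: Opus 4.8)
The plan is to prove \eqref{eq_401} and \eqref{eq_402} separately; the first is a short consequence of Proposition \ref{prop_201}, while the second requires a term-by-term analysis of the limit $y\to+\infty$ in \eqref{eq_105}. Throughout, $\mathfrak{P}_t$ denotes the function of \eqref{eq_105}.

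For \eqref{eq_401}, since $0\notin\Gamma$ (because $\xi(1/2)\neq 0$) and the series \eqref{eq_202} converges locally uniformly on $\C\setminus\Gamma$, Proposition \ref{prop_201} gives $\mathfrak{P}_t(0)=P_t(0)=-\sum_{\gamma\in\Gamma}m_\gamma\frac{e^{i\gamma t}-1}{\gamma^{2}}$, a series that converges absolutely because $\sum_{\gamma}m_\gamma|\gamma|^{-1-\delta}<\infty$. Pairing $\gamma$ with $-\gamma$, which lies in $\Gamma$ with the same multiplicity, replaces $e^{i\gamma t}-1$ by its real part $\cos(\gamma t)-1$, so $\mathfrak{P}_t(0)=\sum_{\gamma\in\Gamma}m_\gamma\frac{1-\cos(\gamma t)}{\gamma^{2}}=-g(t)$ by \eqref{eq_302}. (Alternatively one can let $z\to 0$ directly in \eqref{eq_105}: the apparent poles at $z=0$ on its last three lines are removable in view of \eqref{eq_203}, the factor $(e^{itz}-1)/(iz)$ tends to $t$, and using $(Z'/Z)(1/2)=0$ — equivalently $\xi'(1/2)=0$ — together with $(\Gamma'/\Gamma)(1/2)=-\gamma_0-2\log 2$, the four blocks of terms reproduce the four terms of $-g(t)$ from \eqref{eq_101} line by line.)

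For \eqref{eq_402} I would put $z=iy$ in \eqref{eq_105} (so $iz=-y$), multiply by $y$, and examine the six terms as $y\to+\infty$. The first two terms give $\frac{4y(e^{t/2}-1)}{1+2y}+\frac{4y(e^{-t/2}-1)}{1-2y}\to 2(e^{t/2}-e^{-t/2})$; the prime sum gives $-\sum_{n\le e^{t}}\frac{\Lambda(n)}{\sqrt n}\bigl(1-e^{-y(t-\log n)}\bigr)\to-\sum_{n\le e^{t}}\frac{\Lambda(n)}{\sqrt n}$, the hypothesis $t\neq\log n$ ensuring $t-\log n>0$ for all $n\le e^{t}$; and the factor $(e^{itz}-1)/(iz)$ in front of the line-two bracket becomes $(1-e^{-yt})/y$. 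Writing $\frac{Z'}{Z}(\tfrac12+y)=-\tfrac12\log\pi+\tfrac12\frac{\Gamma'}{\Gamma}(\tfrac14+\tfrac y2)+\frac{\zeta'}{\zeta}(\tfrac12+y)$ exhibits $\tfrac12\frac{\Gamma'}{\Gamma}(\tfrac14+\tfrac y2)$ as the only unbounded contribution to $y\,\mathfrak{P}_t(iy)$ — exactly the quantity subtracted in \eqref{eq_402} — and after the subtraction what survives from it is $-e^{-yt}\cdot\tfrac12\frac{\Gamma'}{\Gamma}(\tfrac14+\tfrac y2)\to 0$, while $\frac{\zeta'}{\zeta}(\tfrac12+y)\to 0$ and the $\log\pi$ pieces add up to $-\tfrac12\log\pi$. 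The remaining $y$-independent pieces coming from lines three and four of \eqref{eq_105} are $\tfrac12\frac{\Gamma'}{\Gamma}(\tfrac14)+\tfrac12 e^{-t/2}\Phi(e^{-2t},1,\tfrac14)$, and differentiating the $\Phi$-term of \eqref{eq_101} — using $\frac{d}{dt}\bigl[e^{-t/2}\Phi(e^{-2t},2,\tfrac14)\bigr]=-2e^{-t/2}\Phi(e^{-2t},1,\tfrac14)$, i.e. termwise differentiation of $\sum_{n\ge 0}e^{-2t(n+1/4)}(n+\tfrac14)^{-2}$ — shows this matches the corresponding piece of $-g'(t)$. Collecting everything and comparing with $-g'(t)$ computed from \eqref{eq_101} (which for $t\neq\log n$ has locally constant prime sum) yields the claimed limit.

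The delicate point, which I expect to be the main obstacle, is that the three terms of \eqref{eq_105} carrying the argument $\tfrac14-\tfrac y2$ (after multiplication by $y$) are individually singular at $y=\tfrac12+2m$, $m\in\Z_{\ge 0}$. These singularities must cancel, since by Proposition \ref{prop_201} $\mathfrak{P}_t(iy)=P_t(iy)$ is holomorphic at $z=iy$ for real $y$ (as $iy\notin\Gamma$), so the limit is taken along a genuine real-analytic function for $y>\tfrac12$; concretely, what remains of those three terms after the earlier cancellation is
\[
h(y):=-\tfrac12 e^{-yt}\,\frac{\Gamma'}{\Gamma}\!\left(\tfrac14-\tfrac y2\right)-\tfrac12 e^{-t/2}\,\Phi\!\left(e^{-2t},1,\tfrac14-\tfrac y2\right),
\]
and since the residue of $\Gamma'/\Gamma$ at $-m$ is $-1$, that of $w\mapsto\Phi(e^{-2t},1,w)$ at $-m$ is $e^{-2tm}$, and $e^{-t/2}e^{-2tm}=e^{-(\frac12+2m)t}$, the residues of the two summands of $h$ at $y=\tfrac12+2m$ are opposite and $h$ is entire. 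To obtain $h(y)\to 0$ as $y\to+\infty$ I would bound $\frac{\Gamma'}{\Gamma}(\tfrac14-\tfrac y2)=O(\log y)$ via $\frac{\Gamma'}{\Gamma}(w)=\frac{\Gamma'}{\Gamma}(1-w)-\pi\cot\pi w$, and $\Phi(e^{-2t},1,\tfrac14-\tfrac y2)=O(1/y)$ by splitting the defining series around $n\asymp y/2$, on the set where $\operatorname{dist}(y,\tfrac12+2\Z_{\ge 0})\ge\tfrac14$ — giving $h(y)=O(e^{-yt}\log y)+O(1/y)$ there — and then transfer the bound to all large real $y$ by the maximum modulus principle on discs of radius $\tfrac14$ about the excluded points. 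The other limits — $\frac{\zeta'}{\zeta}(\sigma)\to 0$ and $\frac{\Gamma'}{\Gamma}(\sigma)=O(\log\sigma)$ as $\sigma\to+\infty$, and the exponential smallness of $e^{-yt}$ and $e^{-y(t-\log n)}$ — are routine.
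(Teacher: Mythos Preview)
Your proof is correct and follows essentially the same approach as the paper: both establish \eqref{eq_401} via Proposition \ref{prop_201} together with \eqref{eq_302} (mentioning the direct computation from \eqref{eq_105} as an alternative), and both prove \eqref{eq_402} by substituting $z=iy$ in \eqref{eq_105}, multiplying by $y$, and reading off the limit term by term. Your analysis of the combination $h(y)$ is in fact more careful than the paper's own proof, which passes to the limit without explicitly justifying the cancellation and vanishing of the terms carrying the argument $\tfrac14-\tfrac{y}{2}$.
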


\begin{proof} 
Equality \eqref{eq_401} follows 
from \eqref{eq_202}, Proposition \ref{prop_201}, and \eqref{eq_302}, 
but it also follows directly from \eqref{eq_101} and \eqref{eq_105}. 
In fact, by $\Phi(z,s,a)=\sum_{n=0}^{\infty}z^n(n+a)^{-s}$ 
and \eqref{eq_203}, 
\[
\lim_{z\to 0}
\frac{1}{iz}
\Bigl[ \Phi(e^{-2t},1,1/4) -\Phi(e^{-2t},1,\tfrac{1}{2}(\tfrac{1}{2}+iz)) \Bigr] 
= \frac{1}{2} \Phi(e^{-2t},2,1/4), 
\]
\[
\lim_{z\to 0}
\frac{1}{iz}\left[
\frac{\Gamma'}{\Gamma}\left(\frac{1}{4}\right)
-
\frac{\Gamma'}{\Gamma}\left(\frac{1}{4}+\frac{iz}{2}\right)
\right]
= -\frac{1}{2}\psi_1\left(\frac{1}{4}\right), 
\]
where $\psi_1(z)$ is the polygamma function of order one.  
The expansion 
$\psi_1(w)=\sum_{n=0}^{\infty}(w+n)^{-2}$ 
gives $\psi_1(1/4)=\Phi(1,2,1/4)$. 
By $Z(s)=Z(1-s)$, we have $(Z'/Z)(1/2)=0$. 
Hence, by taking the limit $z \to 0$ in \eqref{eq_105}, 
we obtain the minus of \eqref{eq_101}.

To show \eqref{eq_402}, we multiply \eqref{eq_105} 
by $y$ and substitute $iy$ for $z$: 
\[
\aligned 
y\, \mathfrak{P}_t(iy)
& = \frac{4y(e^{t/2}-1)}{1+2y} + \frac{4y(e^{-t/2}-1)}{1-2y} 
 +  \sum_{n \leq e^{t}} \frac{\Lambda(n)}{\sqrt{n}} (e^{-y(t-\log n)}-1) \\
& \quad - (e^{-yt}-1) \left[ 
\frac{Z'}{Z}\left(\frac{1}{2}+y \right)
-\frac{1}{2}\log\pi 
+\frac{1}{2}
\frac{\Gamma'}{\Gamma}\left(\frac{1}{4}-\frac{y}{2}\right)
\right] \\
& \quad 
+ \frac{1}{2}\left[
\frac{\Gamma'}{\Gamma}\left(\frac{1}{4}\right)
-
\frac{\Gamma'}{\Gamma}\left(\frac{1}{4}-\frac{y}{2}\right)
\right] \\
& \quad +\frac{1}{2}e^{-t/2}
\Bigl[ \Phi(e^{-2t},1,1/4) -\Phi(e^{-2t},1,\tfrac{1}{2}(\tfrac{1}{2}-y)) \Bigr]. 
\endaligned 
\]
Therefore, for positive $t>0$, 
\[
\aligned 
\,&\lim_{y\to + \infty}  \left[ y\, \mathfrak{B}_t(iy) -\frac{1}{2}
\frac{\Gamma'}{\Gamma}\left(\frac{1}{4}+\frac{y}{2} \right) 
+\frac{1}{2} \log \pi
\right] \\
& = 2(e^{t/2}-e^{-t/2})  -  \sum_{n \leq e^{t}} \frac{\Lambda(n)}{\sqrt{n}} 
+ \frac{1}{2}\left[
\frac{\Gamma'}{\Gamma}\left(\frac{1}{4}\right)-\log\pi \right]+\frac{1}{2}e^{-t/2}\Phi(e^{-2t},1,1/4).
\endaligned 
\]
The right-hand side equals to $-g'(t)$ if $t\not=\log n$ 
by \eqref{eq_101} and 
$(d/dt)(e^{-t/2}\Phi(e^{-2t},2,1/4))=-2e^{-t/2}\Phi(e^{-2t},2,1/4)$ 
follows form $\Phi(z,s,a)=\sum_{n=0}^{\infty}z^n(n+a)^{-s}$.  
\end{proof}

\medskip

\noindent
{\bf Acknowledgments}~
This work was supported by JSPS KAKENHI Grant Number JP17K05163 
and JP23K03050.  
This work was also supported by the Research Institute for Mathematical Sciences, 
an International Joint Usage/Research Center located in Kyoto University. 
The author would like to thank Shota Inoue for his comments on the first draft.

%

%
\end{document}